\documentclass[11pt]{amsart}

\usepackage{amsmath,amssymb,amsfonts,amsthm,mathtools}
\usepackage{enumitem}
\usepackage[margin=1.15in]{geometry}
\usepackage[hidelinks]{hyperref}
\usepackage{microtype}

\allowdisplaybreaks

\newtheorem{theorem}{Theorem}[section]
\newtheorem{proposition}[theorem]{Proposition}
\newtheorem{lemma}[theorem]{Lemma}
\newtheorem{corollary}[theorem]{Corollary}
\newtheorem{conjecture}[theorem]{Conjecture}
\theoremstyle{definition}

\theoremstyle{remark}
\newtheorem{remark}[theorem]{Remark}
\newtheorem{example}[theorem]{Example}

\newcommand{\C}{\mathbb C}
\newcommand{\R}{\mathbb R}
\newcommand{\CP}{\mathbb{CP}}
\newcommand{\Ric}{\operatorname{Ric}}
\newcommand{\tr}{\operatorname{tr}}
\newcommand{\Vol}{\operatorname{Vol}}
\newcommand{\eps}{\varepsilon}
\newcommand{\cC}{\mathcal C}

\newcommand{\wt}{\widetilde}

\newcommand{\ov}{\overline}
\newcommand{\ddbar}{\partial\bar\partial}

\title[Constant $k$th-mixed curvature on LCK manifolds]
{Constant $k$th-mixed curvature on locally conformal K\"ahler manifolds}

\author{KAI TANG}
\address{School of Mathematical Sciences, Zhejiang Normal University, Jinhua, Zhejiang 321004, China}
\email{kaitang001@zjnu.edu.cn}

\author{ZUOCAI WANG}
\address{School of Mathematical Sciences, Zhejiang Normal University, Jinhua, Zhejiang 321004, China}
\email{202520500501@zjnu.edu.cn}

\keywords{Mixed curvature, Chern Ricci curvature, locally conformal K\"ahler manifold, Bochner--K\"ahler metric}
\thanks{\text{Foundation item:} Supported by Natural Science Foundation of Zhejiang Province (No. LMS26A010006).}
\begin{document}

\begin{abstract}
In this paper, we consider compact locally conformal K\"ahler manifolds with constant $k$th-mixed curvature. By using a recent method of Huang-Wan \cite{HuangWan} for constant Chern holomorphic sectional curvature, we prove that if a compact LCK manifold has nonzero constant $k$th-mixed curvature, then its Hermitian metric is K\"ahler. For the second mixed curvature, the same conclusion also holds when the curvature constant is zero. We also study some special parameters related to the general constant mixed curvature conjecture. In particular, we obtain a torsion-energy identity and a sign obstruction for compact Hermitian manifolds, and characterize an exceptional K\"ahler case by Bochner--K\"ahler geometry.
\end{abstract}

\maketitle

\section{Introduction}

The holomorphic sectional curvature is an important curvature quantity in complex differential geometry. It is well known that a complete K\"ahler manifold with constant holomorphic sectional curvature is a complex space form. Its universal cover is, up to a scaling, the complex projective space, the complex Euclidean space, or the complex hyperbolic space. For a general Hermitian metric, the Chern curvature tensor does not have all the K\"ahler symmetries, and the holomorphic sectional curvature does not determine the whole curvature tensor. This leads to the following well-known conjecture in non-K\"ahler geometry.

\begin{conjecture}[Constant holomorphic sectional curvature conjecture]\label{conj:HSC}
Let $(M^n,g)$ be a compact Hermitian manifold, $n\geq2$, whose Chern holomorphic sectional curvature is a constant $c$. If $c\neq0$, then $g$ is K\"ahler; if $c=0$, then $g$ is flat.
\end{conjecture}

The compactness assumption in Conjecture \ref{conj:HSC} is essential \cite{CCN}. In complex dimension two, the constant holomorphic sectional curvature conjecture has been completely solved by Balas--Gauduchon and Apostolov--Davidov--Mu\v{s}karov. More precisely, Balas--Gauduchon \cite{BG} proved the conjecture for nonpositive constants, while Apostolov--Davidov--Mu\v{s}karov \cite{ADM} treated the remaining cases by using their classification of compact self-dual Hermitian surfaces. In higher dimensions the conjecture is still open in general. It has been verified under several additional geometric assumptions, including Chern K\"ahler-like metrics, complex nilmanifolds, locally conformal K\"ahler metrics with nonpositive curvature constant, Strominger K\"ahler-like metrics, and some Bismut torsion-parallel metrics \cite{TangKL,LiZheng,CCN,RaoZheng,ChenZhengBTP}. Broder--Tang \cite{BroderTang} obtained several rigidity results for vanishing Hermitian curvature; in particular, a pluriclosed Hermitian metric with vanishing holomorphic sectional curvature on a compact K\"ahler manifold is K\"ahler. Recently, Huang--Wan \cite{HuangWan} proved Conjecture \ref{conj:HSC} for compact locally conformal K\"ahler manifolds without a sign assumption on $c$.

The constant holomorphic sectional curvature problem also gives a natural motivation for studying mixed curvature. In the K\"ahler setting, Chu--Lee--Tam \cite{CLT} introduced the following mixed curvature:
\[
\cC_{\alpha,\beta}(X)
=\frac{\alpha}{|X|^2}\Ric(X,\ov X)+\beta H(X),
\]
where $\alpha,\beta\in\R$. With this convention, $\alpha=0$ gives a multiple of the holomorphic sectional curvature, while $\beta=0$ gives a multiple of the first Chern Ricci curvature. Thus Conjecture \ref{conj:HSC} is the holomorphic sectional curvature case of the constant mixed curvature problem. On the other hand, a nonzero constant first Chern Ricci curvature forces a Hermitian metric to be K\"ahler, since the first Chern Ricci form is closed. It is therefore natural to consider constant linear combinations of these two curvature quantities.

For suitable choices of the parameters, mixed curvature also contains several familiar K\"ahler curvature conditions. It has been used to study positivity, projectivity and rational connectedness \cite{CLT,TangBLMS}. The mixed curvature was later studied for general Hermitian metrics, and the following extension of Conjecture \ref{conj:HSC} was proposed in \cite{TangPJM}.

\begin{conjecture}\label{conj:mixed}
Let $(M^n,g)$ be a compact Hermitian manifold, $n\geq2$, satisfying
\[
\cC_{\alpha,\beta}\equiv c,
\qquad \beta\neq0.
\]
If $c\neq0$, then $g$ is K\"ahler.
\end{conjecture}

For a general Hermitian metric, there are four natural traces of the Chern curvature tensor. We denote the corresponding Chern Ricci tensors by $\Ric^{(k)}$, $1\leq k\leq4$. Chen--Tang \cite{ChenTang} introduced the $k$th-mixed curvature:
\begin{equation}\label{eq:def-kmixed-intro}
\cC^{(k)}_{\alpha,\beta}(X)
=\frac{\alpha}{|X|^2}\Ric^{(k)}(X,\ov X)+\beta H(X),
\qquad \beta\neq0,
\end{equation}
and formulated the corresponding $k$th-mixed curvature conjecture.

\begin{conjecture}\label{conj:kmixed}
Let $(M^n,g)$ be a compact Hermitian manifold, $n\geq2$, with
\[
\cC^{(k)}_{\alpha,\beta}\equiv c,
\qquad \beta\neq0.
\]
If $c\neq0$, then $g$ is K\"ahler.
\end{conjecture}

For $k=1$, Tang \cite{TangPJM} proved Conjecture \ref{conj:kmixed} in complex dimension two. Chen--Zheng \cite{ChenZhengMixed} verified Conjecture \ref{conj:mixed} for several classes of Lie--Hermitian manifolds and for a class of balanced threefolds with parallel Bismut torsion. Chen--Tang \cite{ChenTang} proved that every compact Hermitian surface with constant $k$th-mixed curvature is self-dual, and that constant second mixed curvature forces a Hermitian surface to be K\"ahler. The higher-dimensional problem is still open in general.

In this paper, we consider the locally conformal K\"ahler case. Recall that a Hermitian metric $h$ is locally conformal K\"ahler, or LCK, if
\[
d\omega_h=\theta\wedge\omega_h,\qquad d\theta=0,
\]
where $\theta$ is the Lee form. Tang \cite{TangPJM} obtained partial results for constant mixed curvature on compact LCK manifolds under sign conditions on the parameters and the curvature constant. A recent theorem of Huang--Wan \cite{HuangWan} gives another approach to this problem.

We briefly recall the main idea of their method. On the universal covering $\pi:\wt M\to M$ of an LCK manifold one can write
\[
\pi^*h=e^{2F}g,
\]
where $g$ is K\"ahler. For constant Chern holomorphic sectional curvature, Huang--Wan \cite{HuangWan} derive an algebraic curvature identity for $g$ in which the additional terms are generated by the complex Hessian of $F$ through the standard $L$-operator. Therefore these terms have no Bochner component, and the K\"ahler metric $g$ is Bochner--K\"ahler. The globally conformal K\"ahler case is then treated using the rigidity of compact Bochner--K\"ahler metrics, while the strict LCK case is excluded by Kamishima's uniformization theorem together with the automorphy of the conformal factor. In this way, the sign conditions in the earlier integral approach can be avoided. Chen--Zheng and Chen--Li adapted the same idea to Levi-Civita and Bismut holomorphic sectional curvature, and to more general canonical metric connections \cite{ChenZhengLCK,ChenLi}.

The first purpose of this paper is to show that the method of Huang--Wan also works for all four $k$th-mixed curvatures. The main observation is that after passing to the universal K\"ahler cover, all the extra terms contributed by the four Chern Ricci tensors still lie in the Ricci and scalar summands of the K\"ahler curvature decomposition. More precisely, set
\begin{equation}\label{eq:eps-intro}
\eps_1=n\alpha+\beta,\qquad
\eps_2=\beta,\qquad
\eps_3=\eps_4=\alpha+\beta.
\end{equation}
If $A=\ddbar F$, the constant $k$th-mixed curvature equation can be written as
\begin{equation}\label{eq:unified-intro}
\alpha L(\Ric_g)+4\beta R_g-2\eps_kL(A)=2\Phi_kG_g,
\end{equation}
where
\[
\Phi_k=ce^{2F}\quad (k=1,3,4),
\qquad
\Phi_2=ce^{2F}+2\alpha\Delta_gF.
\]
Consequently, the Bochner projection of \eqref{eq:unified-intro} is $4\beta B_g=0$. This is the main reason why the method of Huang--Wan can be applied to the $k$th-mixed curvature.

Our main result is the following.

\begin{theorem}\label{thm:main-intro}
Let $(M^n,h)$, $n\geq2$, be a compact LCK manifold satisfying
\[
\cC^{(k)}_{\alpha,\beta}\equiv c,\qquad \beta\neq0.
\]
Let $\eps_k$ be given by \eqref{eq:eps-intro}. If
\[
c\neq0\qquad\text{or}\qquad \eps_k\neq0,
\]
then $h$ is K\"ahler.
\end{theorem}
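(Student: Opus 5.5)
We follow the Huang--Wan strategy. On the universal cover $\pi:\wt M\to M$ we write $\pi^*h=e^{2F}g$ with $g$ K\"ahler, and $\pi^*\theta=2dF$ up to the normalization of the Lee form. The first step is the conformal change formula for the Chern curvature. For $h=e^{2F}g$ with $g$ K\"ahler, the Chern connection of $h$ is $\nabla^g+2\partial F$, so
\[
R^h=e^{2F}\bigl(R_g-2\,\ddbar F\otimes g\bigr)
\]
as a $(1,1)$-form valued in $\operatorname{End}T^{1,0}$. We then compute the four traces $\Ric^{(k)}_h$ in terms of $\Ric_g$, $A=\ddbar F$ and $\Delta_gF$, together with $H_h(X)=e^{-2F}\bigl(H_g(X)-2A(X,\ov X)/|X|_g^2\bigr)$. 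We insert these into $\cC^{(k)}_{\alpha,\beta}\equiv c$ and polarize the quartic identity in $X$, which is legitimate because both sides are Hermitian symmetric after symmetrization. The result is the unified equation \eqref{eq:unified-intro} with coefficient $\eps_k$. Applying the Bochner projection gives $B_g=0$, so $g$ is Bochner--K\"ahler. The remaining Ricci-type part of \eqref{eq:unified-intro} expresses $\Ric_g$ (trace-free part) as $2\eps_k A/\alpha$-type terms plus multiples of $g$. It also yields a scalar relation between $s_g$, $\Delta_gF$ and $ce^{2F}$.

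Next we split into the globally conformal K\"ahler and the strict LCK case. In the GCK case $F$ descends to $M$, and $g=e^{-2F}h$ is a compact Bochner--K\"ahler metric on $M$. The aim is to show $F$ is constant. By Bryant's and Kamishima's results, a compact Bochner--K\"ahler manifold is locally symmetric; concretely it is a space form or a product of two space forms of opposite constant holomorphic curvature. On such a manifold $\Ric_g$ is parallel. The trace-free Ricci component of \eqref{eq:unified-intro} then forces the trace-free part of $A$ to be parallel, and $\Delta_g F$ is controlled by the scalar equation. Integrating over compact $M$ gives the following. If $\eps_k\neq0$, the trace-free Hessian part together with $\int\Delta_gF=0$ gives $A=\lambda g$ with $\lambda$ having zero mean. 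The scalar equation then makes $\lambda$ a function of $e^{2F}$, and the maximum principle at extrema of $F$ yields $F$ constant. If $\eps_k=0$ but $c\neq0$, the scalar relation reads $ce^{2F}=\text{const}+(\text{multiple of }\Delta_gF)$, and evaluating at the maximum and minimum of $F$ again forces $F$ constant. For $k=2$ the extra $2\alpha\Delta_gF$ in $\Phi_2$ must be tracked, but it only shifts the coefficient of $\Delta_gF$.

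In the strict LCK case the deck group acts by homotheties of $g$ that are not all isometries, and $e^{2F}$ is automorphic with nontrivial factor. A simply connected Bochner--K\"ahler manifold admitting a cocompact group of holomorphic homotheties with a nontrivial similarity factor must be flat $\C^n$ minus a point, in the spirit of Kamishima's uniformization, giving a Hopf manifold $(\C^n\setminus\{0\})/\Gamma$ with $g$ flat. Then $\Ric_g=0$, $R_g=0$, and \eqref{eq:unified-intro} reduces to $-2\eps_kL(A)=2\Phi_kG_g$. Since $L$ is injective on Hermitian forms for $n\geq2$, this gives $A=\mu g$. Hence $\ddbar F=\mu\,\ddbar|z|^2$, and Hermitian-symmetry of the Hessian forces $\mu$ constant with $F=\mu|z|^2+\text{pluriharmonic}$. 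That is incompatible with the homothety automorphy $F(\gamma z)=F(z)+\log\lambda_\gamma$, unless $\eps_k\mu$ and $c$ both vanish. When $\eps_k=0$ the equation reads $\Phi_k=0$, forcing $c=0$ for $k\neq2$, and for $k=2$ forcing $ce^{2F}=-2\alpha\Delta_gF$, which is impossible on the Hopf quotient by a maximum principle argument. Either way the hypothesis $c\neq0$ or $\eps_k\neq0$ is contradicted.

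The main obstacle is the second step, specifically the GCK case. Here one must show $F$ is constant on a compact locally symmetric Bochner--K\"ahler manifold using only the Ricci and scalar components of \eqref{eq:unified-intro}. In the product case $\Ric_g$ has two eigenvalues, so the trace-free part does not vanish, and the argument splits. The first thing to try is taking the divergence of the trace-free equation with the contracted Bianchi identity, to get $\eps_k\nabla(\Delta_gF)$ proportional to $\nabla(e^{2F})$. This yields an elliptic equation $\Delta_g F=a+be^{2F}$ on compact $M$, and the maximum principle then gives constancy.
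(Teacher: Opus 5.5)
Your derivation of the unified identity \eqref{eq:unified-intro} and of $B(R_g)=0$ matches the paper. Both branches after that have gaps, and the one in the strict LCK branch is a missing idea. Once you have $A=\mu g_{\mathrm{Euc}}$ with $\mu$ constant, automorphy forces $\mu=0$: indeed $\gamma^*A=\ddbar(F\circ\gamma)=A$, while $\gamma^*(\mu g_{\mathrm{Euc}})=\mu r^2g_{\mathrm{Euc}}$ with $r\neq1$. Then $\Phi_k=-2\eps_k\mu=0$ gives $c=0$. You stop at ``incompatible \dots\ unless $\eps_k\mu$ and $c$ both vanish'' and then declare the hypothesis ``$c\neq0$ or $\eps_k\neq0$'' contradicted. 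It is not: the sub-case $\eps_k\neq0$, $c=0$, $F$ pluriharmonic is left open. This is exactly what the theorem must exclude (e.g.\ Corollary~\ref{cor:k2-intro} with $c=0$). The missing step is that a real pluriharmonic $F$ on $\C^n\setminus\{0\}$, $n\geq2$, cannot satisfy $F(rUz)=F(z)-\log r$ with $r\neq1$. By simple connectivity $F=\operatorname{Re}G$ with $G$ holomorphic, and Hartogs extends $G$ across $0$. So $G(rUz)-G(z)$ has constant real part $-\log r$, hence is constant, and evaluating at $z=0$ gives $\log r=0$. This is genuinely dimension-dependent ($-\log|z|$ is a counterexample when $n=1$), so it cannot be waved through. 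Your branch ``$\eps_k=0$, $k=2$'' is vacuous, since $\eps_2=\beta\neq0$. A maximum principle ``on the Hopf quotient'' would not make sense there anyway, because $F$ does not descend.

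In the globally conformal branch the maximum principle does not give what you claim. For $k=1,3,4$ the trace of the unified identity gives $\Delta_gF=a+be^{2F}$ with $b=-nc/(2\eps_k)$. At the extrema you only get $a+be^{2F_{\max}}\leq0\leq a+be^{2F_{\min}}$. This forces constancy when $b>0$, but is consistent with nonconstant $F$ when $c/\eps_k>0$ (compare $\Delta u=1-e^{2u}$ on the round $S^2$, solved by M\"obius conformal factors). For $k=2$, absorbing the $2\alpha\Delta_gF$ term of $\Phi_2$ makes the coefficient of $\Delta_gF$ a multiple of $n\alpha+\beta$. That coefficient can vanish, and then for $c=0$ the scalar equation carries no information. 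Also, $A^0=0$ does not come from $\int_M\Delta_gF=0$. It comes from $A^0$ being parallel, hence harmonic and $L^2$-orthogonal to the exact form $\sqrt{-1}\ddbar F$, so that $\int_M|A^0|^2=\int_M\langle A,A^0\rangle=0$.

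The repair is to use the pointwise identity $A=\lambda g_0$ instead of its trace. For $n\geq2$, $d(\lambda\omega_{g_0})=0$ forces $d\lambda=0$, the zero mean gives $\lambda=0$, and a pluriharmonic function on compact $M$ is constant. The paper does this more directly. Differentiating the unified identity with $\nabla R_{g_0}=0$ gives $\nabla_mA_{i\bar j}=-(\Phi_k)_m(g_0)_{i\bar j}/(2\eps_k)$. The symmetry $\nabla_mA_{i\bar j}=\nabla_iA_{m\bar j}$ then gives $d\Phi_k=0$ and $\nabla A=0$, and a parallel exact form vanishes. This argument is uniform in $k$ and in the sign of $c$.
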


As a direct consequence, Conjecture \ref{conj:kmixed} holds for compact LCK manifolds for every $k$. Since $\eps_2=\beta\neq0$, we obtain a stronger result for the second mixed curvature.

\begin{corollary}\label{cor:k2-intro}
Let $(M^n,h)$ be a  compact LCK manifold. If
\[
\cC^{(2)}_{\alpha,\beta}\equiv c,
\qquad \beta\neq0,
\]
then $h$ is K\"ahler for every $c\in\R$.
\end{corollary}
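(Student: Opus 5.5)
This corollary follows from Theorem \ref{thm:main-intro} once we check that its hypothesis holds automatically when $k=2$. By \eqref{eq:eps-intro} we have $\eps_2=\beta$, and $\beta\neq0$ is assumed in \eqref{eq:def-kmixed-intro} and in the statement. So the alternative ``$c\neq0$ or $\eps_k\neq0$'' in Theorem \ref{thm:main-intro} is satisfied for $k=2$ whatever the value of $c$, and that theorem gives that $h$ is K\"ahler. Formally, the proof is this one-line reduction, and the only thing to verify is that the compactness, LCK, and $n\ge2$ hypotheses match those of the theorem. They do.

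It is still worth seeing why the zero-constant case is not trivial, since that is where the real content lies. In the proof of Theorem \ref{thm:main-intro} I would pass to the universal cover, where $\pi^*h=e^{2F}g$ with $g$ K\"ahler. There I would use the unified equation \eqref{eq:unified-intro}. Its Bochner projection gives $4\beta B_g=0$, so $g$ is Bochner--K\"ahler. The remaining Ricci-type part of \eqref{eq:unified-intro} then expresses $\eps_k A=\eps_k\ddbar F$ through $\Ric_g$, $G_g$ and $\Phi_k$.

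When $c=0$ the term $ce^{2F}$ disappears. For $k=1,3,4$ one then needs $\eps_k\neq0$ to keep control of the Hessian of $F$. For $k=2$ the coefficient $\eps_2=\beta$ never vanishes, so the trace-free part of $\ddbar F$ is always tied to the trace-free Ricci of $g$. Combined with the $\Delta_gF$ term in $\Phi_2$, this still lets one run the Huang--Wan dichotomy. In the globally conformal K\"ahler case one uses the rigidity of compact Bochner--K\"ahler metrics. In the strict LCK case one uses Kamishima's uniformization together with the automorphy of $e^{2F}$. In either case $F$ is forced to be constant.

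The main obstacle belongs to the theorem, not to this corollary: it is the strict LCK exclusion when $c=0$. Here the automorphy factor has to contradict the constraint on $\ddbar F$ coming from \eqref{eq:unified-intro}, and this is exactly where $\eps_k\neq0$ is used. For the corollary itself there is no obstacle beyond citing Theorem \ref{thm:main-intro} with $\eps_2=\beta\neq0$.
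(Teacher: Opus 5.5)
Your proof is correct and is the same as the paper's: since $\eps_2=\beta\neq0$, the hypothesis ``$c\neq0$ or $\eps_k\neq0$'' of Theorem \ref{thm:main-intro} holds for every $c$. One small correction to your commentary: in the strict LCK branch the argument does not produce a constant $F$. It forces $\ddbar F=0$, which contradicts the automorphy $F\circ\gamma=F-\log r$ with $r\neq1$, so that branch simply cannot occur.
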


We can also describe the exceptional zero-curvature cases. For $k=1$, the exceptional parameter is $n\alpha+\beta=0$; for $k=3,4$, it is $\alpha+\beta=0$. These are genuine conformal degeneracies. In the first case the canonical K\"ahler cover is flat. For $k=3,4$ the same is true in complex dimension at least three; in complex dimension two one obtains a scalar-flat Bochner--K\"ahler alternative (see Theorem \ref{thm:exceptional}).

We also give two results for some special parameters under weaker assumptions. The Berger average singles out the line
\[
(n+1)\alpha+2\beta=0.
\]
On this line the curvature constant controls exactly the $L^2$-energy of the Chern torsion one-form. The zero constant case extends the balanced result of Chen--Tang \cite{ChenTang}.

\begin{theorem}\label{thm:critical-intro}
Let $(M^n,g)$ be a compact Hermitian manifold satisfying
\[
\cC^{(k)}_{\alpha,\beta}\equiv c,
\qquad
(n+1)\alpha+2\beta=0.
\]
Let $\eta$ be the Chern torsion one-form. Then
\[
-\frac{n(n+1)c}{\beta}\Vol(M,g)
=\int_M|\eta|^2\,dV_g,
\qquad k=1,2,
\]
and
\[
\frac{n(n+1)c}{\beta}\Vol(M,g)
=\int_M|\eta|^2\,dV_g,
\qquad k=3,4.
\]
In particular, $c/\beta\leq0$ for $k=1,2$, whereas $c/\beta\geq0$ for $k=3,4$. Moreover, $c=0$ if and only if $g$ is balanced.
\end{theorem}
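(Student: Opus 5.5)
The plan is to reduce the pointwise identity $\cC^{(k)}_{\alpha,\beta}\equiv c$ to a scalar identity by averaging over the unit sphere in each tangent space (the Berger average), and then to integrate the resulting scalar identity over $M$. Let $R$ be the Chern curvature, written in a unitary frame as $R_{i\bar j k\bar l}$. Put $s=\sum R_{i\bar i j\bar j}$ (the trace of $\Ric^{(1)}$ and of $\Ric^{(2)}$) and $\hat s=\sum R_{i\bar j j\bar i}$ (the trace of $\Ric^{(3)}$ and of $\Ric^{(4)}$). Write $s_k$ for the trace of $\Ric^{(k)}$, so $s_1=s_2=s$ and $s_3=s_4=\hat s$.

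\textbf{Step 1: Berger averages.} I will average both terms of $\cC^{(k)}_{\alpha,\beta}$ over the unit sphere $S_x\subset T_xM$ with the normalized measure. The standard moment identities for $|X|=1$ are
\[
\int_{S_x}X^i\ov{X^j}=\frac{\delta_{ij}}{n},
\qquad
\int_{S_x}X^i\ov{X^j}X^k\ov{X^l}=\frac{\delta_{ij}\delta_{kl}+\delta_{il}\delta_{kj}}{n(n+1)}.
\]
From these, the Ricci term averages to $s_k/n$. The holomorphic sectional curvature averages to $(s+\hat s)/(n(n+1))$; no Kähler symmetry is needed here, since only the two index pairings appear. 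Averaging the equation $\cC^{(k)}_{\alpha,\beta}\equiv c$ therefore gives
\[
c=\frac{\alpha}{n}s_k+\frac{\beta}{n(n+1)}(s+\hat s).
\]
On the line $(n+1)\alpha+2\beta=0$ we have $\alpha/n=-2\beta/(n(n+1))$. This yields
\[
\frac{n(n+1)c}{\beta}=\hat s-s\quad(k=1,2),
\qquad
\frac{n(n+1)c}{\beta}=s-\hat s\quad(k=3,4).
\]

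\textbf{Step 2: a torsion–scalar identity.} Next I will show that $\int_M(s-\hat s)\,dV_g=\int_M|\eta|^2\,dV_g$. The starting point is the first Bianchi identity for the Chern connection. Writing $T^l_{ik}$ for the torsion, it takes the form
\[
R_{i\bar j k\bar l}-R_{k\bar j i\bar l}=\nabla_{\bar j}T_{ik\bar l}\quad\text{(up to convention)}.
\]
Tracing over $j=k$ and $i=l$ gives
\[
s-\hat s=\pm\sum_j\nabla_{\bar j}\eta_{j}+\text{(possibly quadratic torsion terms)},
\]
where $\eta=\sum_k T^k_{ik}\,dz^i$ is the torsion one-form. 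Alternatively, I can obtain the same relation from the standard expression of $\ddbar\omega^{n-1}$ in terms of $s-\hat s$, $|\eta|^2$ and $\partial^*\eta$.

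The key point is the divergence theorem. The Chern divergence of a $(1,0)$-vector field $V$ differs from its Riemannian divergence by the contraction $\eta(V)$. Applying this to $V=\eta^\sharp$ turns the divergence term into $|\eta|^2$ after integration. Hence $\int_M(s-\hat s)=\int_M|\eta|^2$, with the sign fixed by checking a model case, for example a Hopf or otherwise non-balanced LCK example.

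\textbf{Step 3: conclusion.} Combining Steps 1 and 2 gives the two displayed identities of Theorem \ref{thm:critical-intro}. The sign statements follow because the right-hand side $\int_M|\eta|^2\,dV_g$ is nonnegative. Moreover $c=0$ if and only if $\eta\equiv0$, and $\eta\equiv0$ is exactly the balanced condition $d\omega^{n-1}=0$.

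\textbf{Main obstacle.} I expect the main difficulty to be Step 2: getting the constants and signs right. This includes the normalization of $\eta$ and of $|\eta|^2$ relative to the Bianchi identity, and checking whether quadratic torsion terms cancel or combine into exactly $|\eta|^2$ after integration. I would first derive the identity by computing $\int_M\ddbar\omega^{n-1}\wedge\omega=0$ in a unitary frame. That route packages all the divergence terms automatically and leaves only $s-\hat s$ and $|\eta|^2$.
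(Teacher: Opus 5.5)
Your proposal is correct and is essentially the paper's proof. Berger averaging gives $((n+1)\alpha+\beta)u+\beta v=n(n+1)c$ (with $u,v$ swapped for $k=3,4$). The critical line turns this into $u-v=\mp n(n+1)c/\beta$, and integrating against $\int_M(u-v)\,dV_g=\int_M|\eta|^2\,dV_g$ yields the identities, the sign statements and the balanced characterization. The only difference is that the paper cites this torsion identity from Chen--Tang instead of re-deriving it. Your derivation does work: the traced Bianchi identity $R_{i\bar jk\bar l}-R_{k\bar ji\bar l}=-\nabla_{\bar j}T_{ik\bar l}$ has no quadratic torsion terms, and $|\eta|^2$ comes entirely from the Chern-versus-Riemannian divergence correction, so no model-case sign check is needed.
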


For the original mixed curvature, the normalization $(\alpha,\beta)=(2,-(n+1))$ gives
\[
nc\Vol(M,g)=\int_M|\eta|^2\,dV_g.
\]
Thus $c\geq0$, and a K\"ahler solution necessarily has $c=0$. Consequently, at this parameter Conjecture \ref{conj:mixed} is equivalent to the nonexistence of a compact Hermitian metric with positive constant $\cC^{(1)}_{2,-(n+1)}$. This gives a concrete special case of the general conjecture.

For K\"ahler metrics, Chen--Tang proved that a constant mixed curvature metric has constant holomorphic sectional curvature under the two additional assumptions
\[
(n+1)\alpha+2\beta\neq0,
\qquad
(n+2)\alpha+4\beta\neq0
\]
(see \cite[Proposition 3.1]{ChenTang}). In the K\"ahler constant mixed-curvature problem considered here, we do not impose either of these two conditions in the statement below. The first one is in fact unnecessary for the complex-space-form conclusion, while the second one leads to a different geometric branch rather than an obstruction.

Indeed, the second distinguished line is
\[
(n+2)\alpha+4\beta=0.
\]
It is exactly the Bochner--K\"ahler degenerate line in the K\"ahler problem. Thus the two parameter conditions appearing in \cite[Proposition 3.1]{ChenTang} play essentially different roles.

\begin{theorem}\label{thm:kahler-intro}
Let $(M^n,g)$ be a  K\"ahler manifold with constant mixed curvature
\[
\cC_{\alpha,\beta}\equiv c,
\qquad \beta\neq0.
\]
Set $D=(n+2)\alpha+4\beta$.
\begin{enumerate}[label=\textnormal{(\arabic*)}]
\item If $D\neq0$, then $g$ has constant holomorphic sectional curvature.
\item If $D=0$, then $g$ is Bochner--K\"ahler with constant scalar curvature
\[
s=\frac{2(n+1)c}{\alpha}.
\]
Conversely, every Bochner--K\"ahler metric with this constant scalar curvature satisfies the corresponding constant mixed curvature equation.
\end{enumerate}
If $M$ is compact, the metric in the second case is locally symmetric.
\end{theorem}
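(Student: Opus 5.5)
The plan is to turn the constant mixed curvature condition into an algebraic identity for the Kähler curvature tensor, and then split it along the decomposition $R=B+L(\cdot)+(\text{scalar})\,G$.

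\textbf{Step 1 (polarization).} Write $\cC_{\alpha,\beta}\equiv c$ as
\[
\alpha\Ric(X,\ov X)|X|^2+\beta R(X,\ov X,X,\ov X)=c|X|^4 .
\]
Both sides are Kähler-symmetric quartic forms. Polarization then gives the tensor identity
\[
\alpha L(\Ric)+4\beta R=2cG
\]
in the normalization of \eqref{eq:unified-intro} with $A=0$. Here $G(X,\ov Y,Z,\ov W)=g_{X\ov Y}g_{Z\ov W}+g_{X\ov W}g_{Z\ov Y}$.

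\textbf{Step 2 (Bochner projection).} Project this identity onto its components. The Bochner component gives $\beta B=0$, so $B=0$ because $\beta\neq0$. Hence $R$ is determined by $\Ric$:
\[
R=\tfrac{1}{n+2}L(\Ric)-\tfrac{s}{2(n+1)(n+2)}G,
\]
up to the paper's constants.

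\textbf{Step 3 (traceless Ricci part).} Substitute into the identity and take the traceless Ricci part. Writing $\Ric_0=\Ric-\frac{s}{n}g$, this gives
\[
\Bigl(\alpha+\tfrac{4\beta}{n+2}\Bigr)L(\Ric_0)=0 .
\]
The coefficient is $D/(n+2)$. I expect the main bookkeeping obstacle to be fixing the constants of $L$ and $G$ consistently, so that this coefficient comes out exactly proportional to $D$. I would check it on $\CP^n$ and on a product metric.

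\textbf{Step 4 (case $D\neq0$).} Since $L$ is injective, $\Ric_0=0$, so $g$ is Einstein and Bochner-flat. Then $R$ is a multiple of $G$, with constant factor by Schur or by the scalar part of the identity. Hence $g$ has constant holomorphic sectional curvature. Note that $(n+1)\alpha+2\beta$ never appears as a divisor here, which explains why that condition is unnecessary.

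\textbf{Step 5 (case $D=0$).} Take the trace, i.e. the scalar part, of the identity. This yields a relation of the form
\[
\bigl(\alpha(n+\cdots)+\beta\cdots\bigr)s=\text{const}\cdot c .
\]
Using $4\beta=-(n+2)\alpha$, this simplifies to $s=2(n+1)c/\alpha$. (Here $\alpha\neq0$, since otherwise $\beta=0$.) For the converse, run the computation backwards. If $B=0$ and $s$ is this constant, then $R$ is given by the formula in Step 2. Substituting, the $\Ric_0$ terms cancel because $D=0$, and the scalar terms produce exactly $2cG$. Hence $\cC_{\alpha,\beta}\equiv c$.

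\textbf{Step 6 (compact case).} For the final claim, use the known rigidity of compact Bochner–Kähler manifolds of constant scalar curvature (Bryant; Kamishima). A Bochner–Kähler metric with constant scalar curvature satisfies the contracted Bianchi identity, and Bochner-flatness forces $\nabla\Ric$ to be controlled by $ds=0$. More precisely, the vanishing of the Bochner tensor makes $\Ric$ a Codazzi-type tensor. With constant $s$, a Bochner-type integral formula on compact $M$ then gives $\nabla\Ric=0$. Since $R$ is expressed algebraically through $\Ric$, this yields $\nabla R=0$, so $g$ is locally symmetric. I would simply cite Bryant's classification of Bochner–Kähler metrics for this step rather than reprove it.
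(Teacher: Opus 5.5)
Your proposal is correct and follows essentially the paper's route: polarize to $\alpha L(\Ric)+4\beta R=2cG$, extract $D\,\Ric^0=0$ from the trace-free Ricci part, use the K\"ahler Schur lemma when $D\neq0$ (the paper's version is $d\lambda\wedge\omega=0$ plus the pointwise Lefschetz isomorphism), take the scalar trace when $D=0$, and cite the compact Bochner--K\"ahler classification. Three corrections: (i) with your normalization of $G$, the Bochner-flat formula is $R=\frac{1}{n+2}L(\Ric)-\frac{s}{(n+1)(n+2)}G$, since your extra factor $\frac12$ would make the converse return $cG$ rather than $2cG$; (ii) in Step 4 only the Schur alternative works, because the scalar part of the identity reads $\bigl((n+1)\alpha+2\beta\bigr)s=n(n+1)c$ and says nothing about $s$ on the line $(n+1)\alpha+2\beta=0$ (which lies inside $D\neq0$), and that is exactly the case your divisor remark concerns; (iii) in Step 6 no integral formula is needed, because Bochner-flatness and the second Bianchi identity give $\nabla_i\Ric_{k\bar l}=\frac{1}{n+1}\bigl(\partial_i s\,g_{k\bar l}+\partial_k s\,g_{i\bar l}\bigr)$, so constant $s$ already forces $\nabla\Ric=0$ and hence $\nabla R=0$, even without compactness.
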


Theorem \ref{thm:kahler-intro} also shows that the condition $(n+1)\alpha+2\beta\neq0$ is not needed to obtain a complex space form away from the Bochner-critical line. On the exceptional line, products of complex space forms of opposite holomorphic sectional curvatures give non-space-form examples with constant mixed curvature.

The rest of this paper is organized as follows. Section 2 recalls the Chern curvature formulas, Berger averaging, and the universal K\"ahler cover of an LCK manifold. Section 3 records two general Hermitian consequences of pointwise constant $k$th-mixed curvature. In Section 4 we derive \eqref{eq:unified-intro} and prove Bochner-flatness of the universal K\"ahler cover. Sections 5 and 6 treat the globally conformal and strict LCK branches. Section 7 proves the LCK classification. Section 8 discusses the two special parameter phenomena above.

\section{Preliminaries}

Throughout the paper, $M$ is connected and has complex dimension $n\geq2$. We use the Einstein summation convention.

\subsection{Chern curvature and the four Ricci tensors}

Let $(M^n,g)$ be a Hermitian manifold. In local holomorphic coordinates $(z^1,\ldots,z^n)$, the Chern curvature tensor is
\begin{equation}\label{eq:chern-curv}
R_{i\bar j k\bar l}
=-\partial_i\partial_{\bar j}g_{k\bar l}
+g^{p\bar q}\partial_i g_{k\bar q}\partial_{\bar j}g_{p\bar l}.
\end{equation}
The four Chern Ricci tensors are
\begin{align*}
\Ric^{(1)}_{i\bar j}&=g^{k\bar l}R_{i\bar j k\bar l},
&\Ric^{(2)}_{k\bar l}&=g^{i\bar j}R_{i\bar j k\bar l},\\
\Ric^{(3)}_{i\bar l}&=g^{k\bar j}R_{i\bar j k\bar l},
&\Ric^{(4)}_{k\bar j}&=g^{i\bar l}R_{i\bar j k\bar l}.
\end{align*}
Their scalar contractions are denoted by
\begin{equation}\label{eq:u-v}
u=g^{i\bar j}\Ric^{(1)}_{i\bar j}
=g^{k\bar l}\Ric^{(2)}_{k\bar l},
\qquad
v=g^{i\bar l}\Ric^{(3)}_{i\bar l}
=g^{k\bar j}\Ric^{(4)}_{k\bar j}.
\end{equation}
Let $T$ be the Chern torsion and let
\[
\eta=\eta_i\,dz^i,
\qquad
\eta_i=T^k_{ik},
\]
be its torsion one-form. The metric is balanced if and only if $\eta=0$. We shall use the standard identity
\begin{equation}\label{eq:uv-torsion}
\int_M(u-v)\,dV_g
=\int_M|\eta|^2\,dV_g
\end{equation}
for compact Hermitian manifolds \cite{ChenTang}.

For $0\neq X\in T^{1,0}M$, the Chern holomorphic sectional curvature is
\[
H_g(X)=\frac{R(X,\bar X,X,\bar X)}{|X|_g^4}.
\]
For fixed $\alpha,\beta\in\R$ with $\beta\neq0$, the $k$th-mixed curvature is defined by
\begin{equation}\label{eq:def-kmixed}
\cC^{(k)}_{\alpha,\beta}(X)
=\frac{\alpha}{|X|_g^2}\Ric^{(k)}(X,\bar X)+\beta H_g(X).
\end{equation}
When $k=1$, we simply write $\cC_{\alpha,\beta}$.

\subsection{Berger averaging}

Let $S^{2n-1}\subset T_p^{1,0}M$ be the unit sphere. The usual unitary average gives
\begin{equation}\label{eq:berger-H}
\frac{1}{\Vol(S^{2n-1})}\int_{S^{2n-1}}H_g(Z)\,d\theta(Z)
=\frac{u+v}{n(n+1)}.
\end{equation}
Moreover,
\begin{equation}\label{eq:berger-Ric12}
\frac{1}{\Vol(S^{2n-1})}\int_{S^{2n-1}}\Ric^{(k)}(Z,\bar Z)\,d\theta(Z)=\frac{u}{n},
\qquad k=1,2,
\end{equation}
and
\begin{equation}\label{eq:berger-Ric34}
\frac{1}{\Vol(S^{2n-1})}\int_{S^{2n-1}}\Ric^{(k)}(Z,\bar Z)\,d\theta(Z)=\frac{v}{n},
\qquad k=3,4.
\end{equation}
The same average formulas are used in Chen--Tang \cite{ChenTang}.

\subsection{Conformal change}

Let
\[
\wt g=e^{2F}g.
\]
The Chern curvature transforms as
\begin{equation}\label{eq:conf-curv}
\wt R_{i\bar j k\bar l}
=e^{2F}\bigl(R_{i\bar j k\bar l}-2F_{i\bar j}g_{k\bar l}\bigr),
\end{equation}
where $F_{i\bar j}=\partial_i\partial_{\bar j}F$. Since
$\wt g^{i\bar j}=e^{-2F}g^{i\bar j}$, the conformal factor in
\eqref{eq:conf-curv} is canceled when taking any of the four Chern
Ricci contractions. Consequently,
\begin{align}
\wt\Ric^{(1)}&=\Ric^{(1)}-2n\ddbar F,\label{eq:conf-ric1}\\
\wt\Ric^{(2)}&=\Ric^{(2)}-2(\Delta_gF)g,\label{eq:conf-ric2}\\
\wt\Ric^{(3)}&=\Ric^{(3)}-2\ddbar F,\label{eq:conf-ric3}\\
\wt\Ric^{(4)}&=\Ric^{(4)}-2\ddbar F.\label{eq:conf-ric4}
\end{align}
Also,
\begin{equation}\label{eq:conf-H}
H_{\wt g}(X)
=e^{-2F}\left(H_g(X)-2\frac{F_{i\bar j}X^i\bar X^j}{|X|_g^2}\right).
\end{equation}
Combining \eqref{eq:conf-ric1}--\eqref{eq:conf-ric4} with
\eqref{eq:conf-H}, we obtain the conformal transformation formulas
for the four mixed curvatures. For $k=1$,
\begin{equation}\label{eq:conf-mixed1}
\cC^{(1)}_{\alpha,\beta}(e^{2F}g)(X)
=e^{-2F}\left[
\cC^{(1)}_{\alpha,\beta}(g)(X)
-2(n\alpha+\beta)
\frac{F_{i\bar j}X^i\bar X^j}{|X|_g^2}
\right].
\end{equation}
For $k=2$,
\begin{equation}\label{eq:conf-mixed2}
\cC^{(2)}_{\alpha,\beta}(e^{2F}g)(X)
=e^{-2F}\left[
\cC^{(2)}_{\alpha,\beta}(g)(X)
-2\alpha\Delta_gF
-2\beta\frac{F_{i\bar j}X^i\bar X^j}{|X|_g^2}
\right].
\end{equation}
For $k=3,4$,
\begin{equation}\label{eq:conf-mixed34}
\cC^{(k)}_{\alpha,\beta}(e^{2F}g)(X)
=e^{-2F}\left[
\cC^{(k)}_{\alpha,\beta}(g)(X)
-2(\alpha+\beta)
\frac{F_{i\bar j}X^i\bar X^j}{|X|_g^2}
\right].
\end{equation}
These formulas explain the coefficients in \eqref{eq:eps-intro}. For
$k=1$, the direction-dependent complex Hessian term disappears when
$n\alpha+\beta=0$, and hence
\begin{equation}\label{eq:conf-cov1}
\cC^{(1)}_{\alpha,-n\alpha}(e^{2F}g)
=e^{-2F}\cC^{(1)}_{\alpha,-n\alpha}(g).
\end{equation}
Similarly, for $k=3,4$, the corresponding exceptional parameter is
$\alpha+\beta=0$, and
\begin{equation}\label{eq:conf-cov34}
\cC^{(k)}_{\alpha,-\alpha}(e^{2F}g)
=e^{-2F}\cC^{(k)}_{\alpha,-\alpha}(g).
\end{equation}
In contrast, there is no conformally degenerate parameter for the
second mixed curvature. Indeed, by \eqref{eq:conf-mixed2}, the
coefficient of the direction-dependent complex Hessian term is always
$\beta$, which is assumed to be nonzero. The term involving
$\Delta_gF$ is independent of the direction $X$ and can be absorbed
into the pointwise curvature function.

\subsection{LCK manifolds and the universal K\"ahler cover}

Let $(M,h)$ be LCK. On the universal covering $\pi:\wt M\to M$, there exist a K\"ahler metric $g$ and a real-valued function $F$ such that
\begin{equation}\label{eq:univ-cover}
\pi^*h=e^{2F}g.
\end{equation}
Every deck transformation $\gamma$ acts by a holomorphic homothety of $g$. More precisely, for each fixed $\gamma$ there is a positive constant $r_\gamma$, depending only on $\gamma$, such that
\begin{equation}\label{eq:deck}
\gamma^*g=r_\gamma^2g,
\qquad
F\circ\gamma=F-\log r_\gamma.
\end{equation}
The Lee form is exact if and only if $r_\gamma=1$ for every deck transformation \cite{DO,Vaisman}.

\subsection{The $L$-operator and K\"ahler curvature decomposition}

Let $(V,q)$ be a Hermitian vector space. For a $(1,1)$-tensor $S$, define
\begin{align}
(G_q)_{i\bar j k\bar l}
&=q_{i\bar j}q_{k\bar l}+q_{i\bar l}q_{k\bar j},\label{eq:Gdef}\\
(L_q(S))_{i\bar j k\bar l}
&=S_{i\bar j}q_{k\bar l}+S_{k\bar j}q_{i\bar l}
 +S_{i\bar l}q_{k\bar j}+S_{k\bar l}q_{i\bar j}.
\label{eq:Ldef}
\end{align}
We omit the subscript when the metric is clear.

\begin{lemma}\label{lem:L-inj}
If $L_q(S)=\mu G_q$, then
\[
S=\frac{\mu}{2}q.
\]
In particular, $L_q$ is injective.
\end{lemma}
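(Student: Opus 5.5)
The plan is to take traces of the identity $L_q(S)=\mu G_q$ with respect to $q$ and read off $S$. We work in a $q$-unitary frame, so $q_{i\bar j}=\delta_{ij}$, and write $s=\tr_q S=q^{i\bar j}S_{i\bar j}$.

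First, contract \eqref{eq:Ldef} with $q^{k\bar l}$. The four terms give $nS_{i\bar j}$, $S_{i\bar j}$, $S_{i\bar j}$ and $s\,q_{i\bar j}$. Hence
\[
q^{k\bar l}(L_q(S))_{i\bar j k\bar l}=(n+2)S_{i\bar j}+s\,q_{i\bar j}.
\]
Contracting \eqref{eq:Gdef} in the same way gives $(n+1)q_{i\bar j}$. The hypothesis therefore yields
\[
(n+2)S_{i\bar j}+s\,q_{i\bar j}=\mu(n+1)q_{i\bar j}.
\]

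Next, take the trace of this identity once more. This gives $(n+2)s+ns=\mu n(n+1)$, so
\[
s=\frac{\mu n}{2}.
\]
Substituting back, we get $(n+2)S_{i\bar j}=\mu\bigl(n+1-\tfrac n2\bigr)q_{i\bar j}=\tfrac{\mu(n+2)}{2}q_{i\bar j}$. Since $n+2\neq0$, this gives $S=\tfrac{\mu}{2}q$.

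For injectivity, $L_q$ is linear, and $L_q(S)=0$ is the case $\mu=0$ of the statement. This forces $S=0$.

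The argument is pure linear algebra. The only step needing care is the index bookkeeping in the first contraction: the middle two terms of $L_q(S)$ each contract to $S_{i\bar j}$, not to a trace, because of where the indices sit. It is also worth noting that no symmetry or Hermitian assumption on $S$ is used.
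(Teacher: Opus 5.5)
Your proposal is correct and follows the paper's proof exactly: you contract the last two indices to get $(n+2)S+(\tr_q S)q=\mu(n+1)q$, trace once more to get $\tr_q S=n\mu/2$, and substitute back. Injectivity is then the case $\mu=0$.
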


\begin{proof}
Contracting the last two indices gives
\[
(n+2)S+(\tr_qS)q=\mu(n+1)q.
\]
Taking one more trace yields $\tr_qS=n\mu/2$, and the result follows.
\end{proof}

For an algebraic K\"ahler curvature tensor,
\begin{equation}\label{eq:BK-decomp}
R=B(R)+\frac{1}{n+2}L(\Ric^0)+\frac{s}{n(n+1)}G,
\end{equation}
where $B(R)$ is the Bochner tensor and $\Ric^0=\Ric-\frac{s}{n}g$. This is the standard $U(n)$-decomposition of a K\"ahler curvature tensor into its Bochner, trace-free Ricci, and scalar parts. The Bochner part is totally trace-free; in particular,
\[
g^{k\bar l}B(R)_{i\bar j k\bar l}=0.
\]
Indeed, for any Hermitian $(1,1)$-tensor $S$,
\begin{equation}\label{eq:L-contraction}
g^{k\bar l}L(S)_{i\bar j k\bar l}
=(n+2)S_{i\bar j}+(\tr_g S)g_{i\bar j},
\end{equation}
and $g^{k\bar l}G_{i\bar j k\bar l}=(n+1)g_{i\bar j}$. Thus the second and third terms in \eqref{eq:BK-decomp} recover respectively the trace-free Ricci tensor and the scalar part of $R$, while $B(R)$ is the remaining curvature component which is invisible to Ricci contraction. In particular, if $B(R)=0$, then the full K\"ahler curvature tensor is determined by its Ricci tensor. Notice that Bochner-flatness does not mean that $R$ itself vanishes.

We use two standard global facts. The compact smooth classification of Bochner--K\"ahler manifolds implies that every connected compact smooth Bochner--K\"ahler manifold is locally symmetric; equivalently, its universal covering is one of the symmetric Bochner--K\"ahler models described by Kamishima and Bryant \cite{Kam94,Kam05,Bryant}. Here the local symmetry is a global rigidity consequence of the compact smooth classification, and is not a formal consequence of $B(R)=0$ for an arbitrary local Bochner--K\"ahler metric. A connected compact strict LCK manifold which is Tricerri--Vanhecke Bochner-flat has, after a constant rescaling, a canonical K\"ahler cover of the following form (see \cite[Proposition 5.1]{HuangWan}; see also \cite{Kam06,Fried}):
\[
(\C^n\setminus\{0\},g_{\mathrm{Euc}}),
\]
and some deck transformation has the form $z\mapsto rUz$, where $r>0$ is a constant with $r\neq1$ and $U\in U(n)$.

\section{Two general Hermitian consequences}

We first give two observations which do not require the LCK condition.

\subsection{The K\"ahler-symmetrized Chern curvature}

Define the K\"ahler symmetrization of the Chern curvature by
\begin{equation}\label{eq:Ksym}
K_{i\bar j k\bar l}
=\frac14\bigl(
R_{i\bar j k\bar l}+R_{k\bar j i\bar l}
+R_{i\bar l k\bar j}+R_{k\bar l i\bar j}
\bigr).
\end{equation}
It has the algebraic K\"ahler symmetries and satisfies
\[
K(X,\bar X,X,\bar X)=R(X,\bar X,X,\bar X).
\]

\begin{proposition}\label{prop:general-polarization}
Suppose that at a point $p\in M$ the $k$th-mixed curvature is independent of the direction:
\[
\cC^{(k)}_{\alpha,\beta}(X)=\varphi(p)
\qquad\text{for all }0\neq X\in T_p^{1,0}M.
\]
Then
\begin{equation}\label{eq:general-polarization}
\alpha L(\Ric^{(k)})+4\beta K=2\varphi G.
\end{equation}
Consequently, the Bochner component of $K$ vanishes.
\end{proposition}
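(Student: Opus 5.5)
The plan is a polarization argument: show that the identity, evaluated on the diagonal $(X,\bar X,X,\bar X)$, is equivalent to the hypothesis, and then recover the full tensors by polarization, since both sides are algebraic K\"ahler curvature tensors.

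First, check the diagonal values. Let $S=\Ric^{(k)}$, a Hermitian $(1,1)$-tensor; Hermitian symmetry is clear for $k=1,2$, and $\Ric^{(4)}$ is the conjugate transpose of $\Ric^{(3)}$. Directly from \eqref{eq:Gdef} and \eqref{eq:Ldef},
\[
G(X,\bar X,X,\bar X)=2|X|^4,\qquad L(S)(X,\bar X,X,\bar X)=4S(X,\bar X)|X|^2 .
\]
By construction $K(X,\bar X,X,\bar X)=R(X,\bar X,X,\bar X)=H(X)|X|^4$. Multiplying the hypothesis $\cC^{(k)}_{\alpha,\beta}(X)=\varphi$ by $4|X|^4$ gives
\[
4\alpha S(X,\bar X)|X|^2+4\beta R(X,\bar X,X,\bar X)=4\varphi|X|^4 .
\]
This is exactly the identity $\bigl(\alpha L(S)+4\beta K-2\varphi G\bigr)(X,\bar X,X,\bar X)=0$ for all $X$.

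Next, polarize. The tensor $P=\alpha L(S)+4\beta K-2\varphi G$ has the K\"ahler symmetries: it is symmetric in $i\leftrightarrow k$ and in $\bar j\leftrightarrow\bar l$, and satisfies $\overline{P_{i\bar jk\bar l}}=P_{j\bar i l\bar k}$. For $K$ this follows from its definition \eqref{eq:Ksym}; for $L(S)$ and $G$ it is visible from the formulas, using that $S$ is Hermitian. The function $X\mapsto P(X,\bar X,X,\bar X)$ is a real polynomial of bidegree $(2,2)$ in $(X,\bar X)$. Treating $X$ and $\bar X$ as independent variables, a form symmetric in the two holomorphic slots and in the two antiholomorphic slots is determined by this polynomial. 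Concretely, replace $X$ by $\sum t_aX_a$ with complex $t_a$ and extract the coefficient of $t_1t_2\bar t_3\bar t_4$. The vanishing of the polynomial therefore forces $P=0$, which is \eqref{eq:general-polarization}. The only point needing care here is confirming that $L(S)$ is symmetric in the holomorphic pair and in the antiholomorphic pair, which holds because the four terms in \eqref{eq:Ldef} are permuted by these swaps.

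Finally, for the Bochner component, apply the decomposition \eqref{eq:BK-decomp} to the algebraic K\"ahler curvature tensor $K$. This gives $4\beta K=4\beta B(K)+L(\cdot)+(\cdot)G$. The $U(n)$-decomposition is orthogonal, and both $L(S)$ and $G$ lie in the Ricci and scalar summands, since $G=\frac12L(g)$. Projecting \eqref{eq:general-polarization} onto the Bochner summand therefore yields $4\beta B(K)=0$, so $B(K)=0$ because $\beta\neq0$. The step to state carefully is that $L$ of any Hermitian tensor has zero Bochner projection. This holds because the image of $L$ is exactly the sum of the Ricci and scalar summands in the standard decomposition.
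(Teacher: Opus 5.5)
Your proposal is correct and follows the paper's argument, with the polarization step written out. The steps match: you derive the diagonal identity $4|X|^4\bigl(\cC^{(k)}_{\alpha,\beta}(X)-\varphi\bigr)=0$, extract the $t_1t_2\bar t_3\bar t_4$ coefficient using only the pair symmetries of $L(\Ric^{(k)})$, $K$ and $G$, and take the Bochner projection, to which $L(\cdot)$ and $G$ contribute nothing.

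There is one small slip. For $k=3,4$ the tensor $\Ric^{(k)}$ is not Hermitian in general; that $\Ric^{(4)}$ is the conjugate transpose of $\Ric^{(3)}$ only says the two are conjugates of each other. Nothing depends on it: your coefficient extraction never uses the conjugation symmetry, and $L$ of an arbitrary complex $(1,1)$-tensor lies in the complexified Ricci-plus-scalar summand, which is why the paper speaks of the complexified module. Alternatively, when $\alpha\neq0$ the hypothesis with $\varphi$ and $H$ real forces $\Ric^{(k)}(X,\bar X)\in\R$ for all $X$, so $\Ric^{(k)}$ is Hermitian under the hypothesis anyway.
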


\begin{proof}
Multiplying the curvature identity by $|X|^4$ gives
\[
\alpha\Ric^{(k)}(X,\bar X)|X|^2
+\beta R(X,\bar X,X,\bar X)
=\varphi|X|^4.
\]
Polarization gives \eqref{eq:general-polarization} (see \cite[Lemma 3.1]{ChenTang}; compare also \cite{CCN,HuangWan}). The tensors $L(\Ric^{(k)})$ and $G$ have no component in the Bochner summand of the complexified K\"ahler curvature module, so $\beta B(K)=0$. Since $\beta\neq0$, $B(K)=0$.
\end{proof}

\begin{remark}
Proposition \ref{prop:general-polarization} only controls the K\"ahler-symmetric part of the Chern curvature. The remaining components contain torsion information, and this is an obstruction to applying the Bochner--K\"ahler rigidity directly to a general Hermitian manifold.
\end{remark}

\subsection{The averaging-critical parameter}

\begin{proof}[Proof of Theorem \ref{thm:critical-intro}]
Averaging \eqref{eq:def-kmixed} and using \eqref{eq:berger-H}--\eqref{eq:berger-Ric34}, we obtain
\begin{equation}\label{eq:avg12}
((n+1)\alpha+\beta)u+\beta v=n(n+1)c,
\qquad k=1,2,
\end{equation}
and
\begin{equation}\label{eq:avg34}
((n+1)\alpha+\beta)v+\beta u=n(n+1)c,
\qquad k=3,4.
\end{equation}
Since $(n+1)\alpha+2\beta=0$, we have $(n+1)\alpha+\beta=-\beta$. Hence
\begin{align}
u-v&=-\frac{n(n+1)c}{\beta},&& k=1,2,\label{eq:uv-critical12}\\
u-v&=\frac{n(n+1)c}{\beta},&& k=3,4.\label{eq:uv-critical34}
\end{align}
Integrating these identities and using \eqref{eq:uv-torsion}, we obtain
\begin{align}
-\frac{n(n+1)c}{\beta}\Vol(M,g)
&=\int_M|\eta|^2\,dV_g,&& k=1,2,\label{eq:energy12}\\
\frac{n(n+1)c}{\beta}\Vol(M,g)
&=\int_M|\eta|^2\,dV_g,&& k=3,4.\label{eq:energy34}
\end{align}
The sign assertions follow immediately.

If $c=0$, the right-hand side vanishes, so $\eta=0$ and $g$ is balanced. Conversely, if $g$ is balanced, then the integral of $u-v$ vanishes. Since \eqref{eq:uv-critical12} or \eqref{eq:uv-critical34} shows that $u-v$ is constant, it follows that $c=0$.
\end{proof}

\begin{corollary}\label{cor:normalized-critical}
For $(\alpha,\beta)=(2,-(n+1))$, a compact Hermitian metric with constant first or second mixed curvature satisfies
\[
nc\Vol(M,g)=\int_M|\eta|^2\,dV_g.
\]
Hence $c\geq0$, with equality if and only if $g$ is balanced.
\end{corollary}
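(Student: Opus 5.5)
This is a direct specialization of Theorem \ref{thm:critical-intro}, so I would not redo any curvature computation. First I check that the parameter lies on the averaging-critical line. With $(\alpha,\beta)=(2,-(n+1))$,
\[
(n+1)\alpha+2\beta=2(n+1)-2(n+1)=0,
\]
and $\beta=-(n+1)\neq0$, so the standing assumption $\beta\neq0$ in \eqref{eq:def-kmixed} holds. Theorem \ref{thm:critical-intro} therefore applies for $k=1,2$.

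Next I substitute the parameters into the energy identity \eqref{eq:energy12}. Since
\[
-\frac{n(n+1)c}{\beta}=-\frac{n(n+1)c}{-(n+1)}=nc,
\]
that identity becomes
\[
nc\,\Vol(M,g)=\int_M|\eta|^2\,dV_g.
\]
The right-hand side is nonnegative and $n\geq1$, so $c\geq0$. For $k=1$ this uses \eqref{eq:avg12} with the first Ricci trace, and for $k=2$ it uses the same relation because both traces average to $u/n$ by \eqref{eq:berger-Ric12}.

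For the equality statement I argue in both directions. If $c=0$, then $\int_M|\eta|^2\,dV_g=0$, so $\eta\equiv0$ and $g$ is balanced. Conversely, if $g$ is balanced, then $\eta=0$, so the identity forces $nc\,\Vol(M,g)=0$. Since $\Vol(M,g)>0$, this gives $c=0$. This is the final clause of Theorem \ref{thm:critical-intro}, read at this parameter.

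There is no real obstacle here. The only step needing care is the sign bookkeeping: the factor $-1/\beta$ must turn into $+1/(n+1)$. The corollary should be stated only for $k=1,2$, because for $k=3,4$ the identity \eqref{eq:energy34} carries the opposite sign and would give $c\leq0$ instead.
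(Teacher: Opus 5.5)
Your proposal is correct and follows the paper's route: the corollary is obtained by substituting $(\alpha,\beta)=(2,-(n+1))$ into the $k=1,2$ energy identity \eqref{eq:energy12} of Theorem \ref{thm:critical-intro}, using $-n(n+1)c/\beta=nc$, and then reading off the sign of $c$ and the balanced case. Your sign bookkeeping is right, and so is your remark that for $k=3,4$ the identity would instead force $c\leq0$.
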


\begin{remark}
For $k=1$, every K\"ahler solution at $(\alpha,\beta)=(2,-(n+1))$ has $c=0$. Thus Conjecture \ref{conj:kmixed} at this parameter is equivalent to the nonexistence of a compact Hermitian metric with $\cC^{(1)}_{2,-(n+1)}\equiv c>0$.
\end{remark}

\section{The universal K\"ahler cover}

Let $(M^n,h)$ be a compact LCK manifold with
\[
\cC^{(k)}_{\alpha,\beta}(h)\equiv c.
\]
Write the universal K\"ahler cover as in \eqref{eq:univ-cover}:
\[
\pi^*h=e^{2F}g.
\]
Since $g$ is K\"ahler, all four Chern Ricci tensors of $g$ coincide with $\Ric_g$. Set
\[
A_{i\bar j}=F_{i\bar j}.
\]

\begin{proposition}\label{prop:cover-identity}
On the universal K\"ahler cover,
\begin{equation}\label{eq:cover-unified}
\alpha L(\Ric_g)+4\beta R_g-2\eps_kL(A)=2\Phi_kG_g,
\end{equation}
where
\begin{equation}\label{eq:Phi}
\Phi_k=
\begin{cases}
ce^{2F},&k=1,3,4,\\
ce^{2F}+2\alpha\Delta_gF,&k=2,
\end{cases}
\end{equation}
and
\[\eps_1=n\alpha+\beta,\qquad \eps_2=\beta,\qquad \eps_3=\eps_4=\alpha+\beta.\]
\end{proposition}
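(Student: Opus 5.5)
The plan is to pull the constant mixed curvature equation back to the universal cover, rewrite it in terms of the Kähler metric $g$ using the conformal change formulas, and then polarize. Since $\pi$ is a local biholomorphic isometry from $(\wt M,\pi^*h)$ to $(M,h)$, the $k$th-mixed curvature of $\pi^*h=e^{2F}g$ is identically $c$ on $\wt M$. Because $g$ is Kähler, its Chern curvature is the Riemannian curvature $R_g$ with full Kähler symmetries, and all four Ricci tensors equal $\Ric_g$. Hence $\cC^{(k)}_{\alpha,\beta}(g)(X)=\alpha\Ric_g(X,\bar X)/|X|^2+\beta H_g(X)$ for every $k$.

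Next, insert this into \eqref{eq:conf-mixed1}, \eqref{eq:conf-mixed2} and \eqref{eq:conf-mixed34}, multiply by $e^{2F}$, and move the direction-independent term $2\alpha\Delta_gF$ to the right-hand side in the case $k=2$. This yields, for every $0\neq X\in T^{1,0}_p\wt M$,
\[
\frac{\alpha}{|X|^2}\Ric_g(X,\bar X)+\beta H_g(X)-2\eps_k\frac{A(X,\bar X)}{|X|^2}=\Phi_k(p),
\]
with $\eps_k$ and $\Phi_k$ exactly as in the statement. Multiplying by $|X|^4$ turns this into an identity between quartic forms in $(X,\bar X)$:
\[
\alpha\Ric_g(X,\bar X)|X|^2+\beta R_g(X,\bar X,X,\bar X)-2\eps_kA(X,\bar X)|X|^2=\Phi_k|X|^4 .
\]

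The key step is polarization. A tensor $T_{i\bar jk\bar l}$ that is symmetric in $(i,k)$ and in $(\bar j,\bar l)$ is uniquely determined by $T(X,\bar X,X,\bar X)$. All four tensors $L(\Ric_g)$, $R_g$, $L(A)$ and $G_g$ have these symmetries; for $R_g$ this is the Kähler property. Evaluating the definitions \eqref{eq:Ldef} and \eqref{eq:Gdef} gives
\[
L(S)(X,\bar X,X,\bar X)=4S(X,\bar X)|X|^2,\qquad G(X,\bar X,X,\bar X)=2|X|^4.
\]
Multiplying the quartic identity by $4$ therefore gives \eqref{eq:cover-unified} after polarization. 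One could also argue directly via Proposition \ref{prop:general-polarization}, absorbing the $A$-term into an "effective Ricci" tensor. The cleanest route, however, is to use the symmetric polarization argument cited there (\cite[Lemma 3.1]{ChenTang}).

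The main care point is sign and normalization bookkeeping. The coefficients must match: $L$ contributes the factor $4$, $G$ the factor $2$, and $A$ comes with $-2\eps_k$. One must also check that $F_{i\bar j}=\partial_i\partial_{\bar j}F$ and $\Delta_gF=g^{i\bar j}F_{i\bar j}$ are used consistently with \eqref{eq:conf-ric2}. Everything else is direct substitution.
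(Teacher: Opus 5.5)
Your proposal is correct and follows essentially the same route as the paper: pull back the constant curvature condition, substitute the conformal change formulas for the $k$th Chern Ricci tensor and holomorphic sectional curvature, multiply by $e^{2F}|X|_g^4$, and polarize. The only difference is that you make explicit the normalizations $L(S)(X,\bar X,X,\bar X)=4S(X,\bar X)|X|^2$ and $G(X,\bar X,X,\bar X)=2|X|^4$, which the paper leaves implicit in its polarization step.
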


\begin{proof}
For $k=1$, formulas \eqref{eq:conf-ric1} and \eqref{eq:conf-H} give, after multiplication by $e^{2F}|X|_g^4$,
\[
\alpha\Ric_g(X,\bar X)|X|^2
+\beta R_g(X,\bar X,X,\bar X)
-2(n\alpha+\beta)A(X,\bar X)|X|^2
=ce^{2F}|X|^4.
\]
Polarization yields \eqref{eq:cover-unified} with $\eps_1=n\alpha+\beta$.

For $k=2$, using \eqref{eq:conf-ric2},
\[
\alpha\Ric_g(X,\bar X)|X|^2
+\beta R_g(X,\bar X,X,\bar X)
-2\beta A(X,\bar X)|X|^2
=(ce^{2F}+2\alpha\Delta_gF)|X|^4,
\]
which gives the asserted formula. The cases $k=3,4$ follow in the same way from \eqref{eq:conf-ric3}--\eqref{eq:conf-ric4}.
\end{proof}

\begin{corollary}\label{cor:cover-BK}
The universal K\"ahler metric $g$ is Bochner--K\"ahler. Equivalently, $h$ is Tricerri--Vanhecke Bochner-flat.
\end{corollary}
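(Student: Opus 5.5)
The plan is to take the Bochner projection of the unified identity \eqref{eq:cover-unified} from Proposition \ref{prop:cover-identity}. Since $g$ is K\"ahler, $R_g$ is an algebraic K\"ahler curvature tensor. Its Ricci tensor is $\Ric_g$, so the decomposition \eqref{eq:BK-decomp} applies. The tensors $L(\Ric_g)$, $L(A)$ and $G_g$ are all of the form $L(S)$ or a multiple of $G$ for Hermitian $(1,1)$-tensors $S$. I will check that such tensors have K\"ahler symmetries. They lie entirely in the trace-free-Ricci and scalar summands, because $L(S)=L(S^0)+\frac{2\tr S}{n}G$ with $S^0$ trace-free, and $L(S^0)$ spans the trace-free Ricci summand. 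Hence their Bochner components vanish. Projecting \eqref{eq:cover-unified} onto the Bochner summand then gives $4\beta B(R_g)=0$. Since $\beta\neq0$, we get $B(R_g)=0$, so $g$ is Bochner--K\"ahler on $\wt M$.

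A more concrete alternative to invoking representation theory is to solve \eqref{eq:cover-unified} for $R_g$:
\[
R_g=\frac{1}{4\beta}\bigl(2\eps_kL(A)-\alpha L(\Ric_g)+2\Phi_kG_g\bigr)=L(S)+\phi G_g
\]
for some Hermitian $S$ and function $\phi$. I then take the Ricci contraction using \eqref{eq:L-contraction} and $g^{k\bar l}G_{i\bar jk\bar l}=(n+1)g_{i\bar j}$. This expresses $\Ric_g$ as $(n+2)S+(\tr S+(n+1)\phi)g$. Substituting back into the right-hand side of \eqref{eq:BK-decomp} should recover exactly $L(S)+\phi G$. That shows $B(R_g)=R_g-\frac{1}{n+2}L(\Ric^0)-\frac{s}{n(n+1)}G=0$. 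This reduces to the bookkeeping identity $L(g)=2G$, which follows from \eqref{eq:Gdef}--\eqref{eq:Ldef}. I consider this step routine.

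For the equivalence with Tricerri--Vanhecke Bochner-flatness of $h$, I would recall that the Tricerri--Vanhecke Bochner tensor of an LCK metric is conformally invariant. It is defined from the K\"ahler-symmetrized curvature, with the conformal change \eqref{eq:conf-curv} modifying only $L$-type terms. Locally, on the cover, it coincides up to the factor $e^{2F}$ with the Bochner tensor of the local K\"ahler metric $g$. Since the cover map is a local isometry for $\pi^*h$, vanishing on $\wt M$ is equivalent to vanishing on $M$.

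I expect the main obstacle to be this last identification rather than the algebra. One must verify that the Tricerri--Vanhecke definition indeed agrees with $e^{2F}B(R_g)$ under $\pi^*h=e^{2F}g$. The Chern curvature of $h$ is not K\"ahler-symmetric, so one must also confirm that the non-K\"ahler part of $\wt R$ coming from $-2F_{i\bar j}g_{k\bar l}$ contributes only $L$-type terms after symmetrization. The symmetrization of $A_{i\bar j}g_{k\bar l}$ is $\frac14L(A)$, which settles the matter. If needed, I would simply cite \cite{HuangWan} for this equivalence.
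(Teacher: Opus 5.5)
Your proposal is correct and is essentially the paper's proof: you project \eqref{eq:cover-unified} onto the Bochner summand, using $L(S)=L(S^0)+\frac{2\tr S}{n}G$ to see that $L(\Ric_g)$, $L(A)$ and $G_g$ have no Bochner part, which gives $4\beta B(R_g)=0$, and your explicit check that any $R_g=L(S)+\phi G_g$ satisfies $B(R_g)=0$ is a correct hands-on verification of the same algebraic fact. For the Tricerri--Vanhecke equivalence the paper likewise just cites the conformal invariance of that tensor (Tricerri--Vanhecke, Kamishima, Huang--Wan), so your symmetrized-Chern-curvature sketch is only a heuristic, and the citation is what actually carries that step.
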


\begin{proof}
Recall that every Hermitian $(1,1)$-tensor $S$ can be written as
\[
S=S^0+\frac{\tr_g S}{n}g.
\]
Since $L(g)=2G$, we have
\[
L(S)=L(S^0)+\frac{2\tr_g S}{n}G.
\]
Hence $L(S)$ has only trace-free Ricci and scalar components in the K\"ahler curvature decomposition \eqref{eq:BK-decomp}, and in particular it has no Bochner component. Therefore both $L(\Ric_g)$ and $L(A)$ have zero Bochner component, while $G_g$ is purely scalar.

Taking the Bochner projection of \eqref{eq:cover-unified}, the only term which can contribute is $4\beta R_g$. Thus
\[
4\beta B(R_g)=0.
\]
Since $\beta\neq0$, we obtain $B(R_g)=0$, and hence the K\"ahler metric $g$ is Bochner--K\"ahler.

Finally, for a K\"ahler metric the Tricerri--Vanhecke Bochner tensor agrees with the ordinary K\"ahler Bochner tensor, while its vanishing is preserved under Hermitian conformal changes \cite{TV,Kam06,HuangWan}. Since $\pi^*h=e^{2F}g$, the equality $B(R_g)=0$ is therefore equivalent to the vanishing of the Tricerri--Vanhecke Bochner tensor of $h$.
\end{proof}

\begin{remark}
This is the key step where the curvature condition is used to apply the Bochner--K\"ahler theory. The extra $k$th-Ricci contribution and every conformal Hessian term occur through $L(\cdot)$, while the Laplacian term for $k=2$ is a multiple of $G_g$. None of these terms has a Bochner component. This is the main reason why the Huang--Wan argument \cite{HuangWan} works for all four $k$th-mixed curvatures.
\end{remark}

\section{The globally conformal K\"ahler branch}

Assume that the Lee form of $h$ is exact. Then
\[
h=e^{2F}g_0
\]
for a K\"ahler metric $g_0$ on the compact manifold $M$. By Corollary \ref{cor:cover-BK}, $g_0$ is Bochner--K\"ahler. The compact smooth classification of Bochner--K\"ahler manifolds \cite{Kam94,Kam05,Bryant} implies that $g_0$ is locally symmetric. Hence
\begin{equation}\label{eq:parallel-R}
\nabla R_{g_0}=0,
\qquad
\nabla\Ric_{g_0}=0.
\end{equation}

\begin{proposition}\label{prop:GCK}
Let $(M^n,h)$ be compact and globally conformal K\"ahler with constant $k$th-mixed curvature. If
\[
\eps_k\neq0
\qquad\text{or}\qquad
c\neq0,
\]
then $h$ is K\"ahler.
\end{proposition}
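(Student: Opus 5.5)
Since the Lee form is exact, $F$ is a global smooth function on the compact manifold $M$, $g_0$ is a compact K\"ahler metric, and Proposition \ref{prop:cover-identity} holds on $M$ itself:
\[
\alpha L(\Ric_{g_0})+4\beta R_{g_0}-2\eps_kL(A)=2\Phi_kG_{g_0},\qquad A=\ddbar F.
\]
The plan is to use \eqref{eq:parallel-R} to solve this equation for $\ddbar F$ in terms of parallel tensors and a single scalar function, and then apply the maximum principle on $M$ to show that $F$ is constant. Once $F$ is constant, $h=e^{2F}g_0$ is a constant multiple of a K\"ahler metric, and hence is K\"ahler.

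\textbf{Step 1: extract $A$.} Since $g_0$ is Bochner--K\"ahler, \eqref{eq:BK-decomp} gives
\[
R_{g_0}=\frac{1}{n+2}L(\Ric^0)+\frac{s}{n(n+1)}G.
\]
Substituting, and using $G=\tfrac12L(g_0)$, the equation takes the form $L(S)=0$, where
\[
S=\Bigl(\alpha+\frac{4\beta}{n+2}\Bigr)\Ric^0-2\eps_kA+\psi\, g_0
\]
for some scalar function $\psi$. Lemma \ref{lem:L-inj} then gives $S=0$. So if $\eps_k\neq0$, we get
\[
\ddbar F=P+f\,g_0,
\]
where $P=\frac{1}{2\eps_k}\bigl(\alpha+\frac{4\beta}{n+2}\bigr)\Ric^0$ is parallel and trace-free, and $f$ is a function. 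Taking traces gives $\Delta F=nf$.

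\textbf{Step 2: the case $\eps_k\neq0$.} Here I would use compactness directly. Integrating the trace identity $\Delta F=nf$ gives $\int_M f=0$. More decisively, at a maximum point of $F$ the Hessian $\ddbar F$ is nonpositive, and at a minimum point it is nonnegative. Since $P$ is parallel, its eigenvalues are constant on $M$, and $P$ is trace-free. If $P\neq0$, some of its eigenvalues are positive and some negative. The plan is to use this sign information at the extremal points, combined with $\int_M f=0$, to force $P=0$ and $f\le 0\le f$ at the extremum. A cleaner route may be a Bochner argument: the identity $\ddbar F-P=f g_0$ combined with $\nabla P=0$ gives $\int_M\langle \ddbar F,P\rangle=0$ after integration by parts, while the left-hand side also equals $\int_M|P|^2$. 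Hence $P=0$, so $\ddbar F=\frac{\Delta F}{n}g_0$.

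In complex dimension $n\ge2$, this means that $\ddbar F$ is a multiple of the K\"ahler form. Taking $d$ of it gives $df\wedge\omega_0=0$, so $df=0$. Thus $f$ is a constant, and integrating $\Delta F=nf$ gives $f=0$. So $F$ is pluriharmonic, hence constant on the compact manifold $M$.

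\textbf{Step 3: the case $\eps_k=0$, $c\neq0$.} The first issue is to check which $k$ can occur here. Since $\eps_2=\beta\neq0$, the case $k=2$ was already handled in Step 2, so only $k=1,3,4$ remain. For these, $\Phi_k=ce^{2F}$. The $A$-term disappears, so the trace-free part of the equation forces
\[
\Bigl(\alpha+\frac{4\beta}{n+2}\Bigr)\Ric^0=0,
\]
and the scalar part reads
\[
(\text{constant determined by }s,\alpha,\beta)=2ce^{2F}.
\]
Here the left side is constant because $s$ is constant for a locally symmetric metric. Since $c\neq0$, this shows $e^{2F}$ is constant, so $F$ is constant and $h$ is K\"ahler.

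The main obstacle I expect is Step 2: making the integration argument rigorous that kills the parallel trace-free part $P$. In particular, one must check that $\int_M\langle\ddbar F,P\rangle=0$ follows from $\nabla P=0$ via the divergence theorem, and also handle the subcase $\alpha(n+2)+4\beta=0$, where $P$ vanishes automatically.
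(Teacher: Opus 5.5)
Your proof is correct, but it is organized differently from the paper's. The paper never solves for $A$ pointwise. It differentiates \eqref{eq:cover-unified} once and uses $\nabla R_{g_0}=0$, $\nabla\Ric_{g_0}=0$ and Lemma~\ref{lem:L-inj} to get $\nabla_mA_{i\bar j}=-\frac{(\Phi_k)_m}{2\eps_k}(g_0)_{i\bar j}$. The K\"ahler symmetry $\nabla_mA_{i\bar j}=\nabla_iA_{m\bar j}$ (with $n\ge2$) then forces $\Phi_k$ to be constant, so $A$ is parallel and exact, hence zero.

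You stay at order zero instead, in three steps:
\begin{enumerate}[label=\textnormal{(\roman*)}]
\item The Bochner--K\"ahler decomposition and injectivity of $L$ give $\ddbar F=P+fg_0$, with $P$ a constant multiple of the parallel tensor $\Ric^0$.
\item You kill $P$ by $L^2$-orthogonality of exact and parallel forms. The integration by parts you were worried about is immediate: $\int_M\langle\ddbar F,P\rangle\,dV=-\int_M\nabla_{\bar j}F\,\nabla_iP^{i\bar j}\,dV=0$ by the divergence theorem.
\item You make $f$ constant via $df\wedge\omega_0=0$ and pointwise Lefschetz injectivity for $n\ge2$, and then get $f=0$ from $\int_M\Delta F=0$.
\end{enumerate}

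Both arguments use the same ingredients: local symmetry from the compact Bochner--K\"ahler classification, closedness of $\ddbar F$ exploited in dimension $n\ge2$, and the vanishing of parallel exact forms. The paper's route is shorter and never needs the explicit curvature decomposition. Yours identifies $\ddbar F$ algebraically and uses only the parallelism of $\Ric_{g_0}$. Your Step~3 is the same as the paper's.

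One caveat: your maximum-principle sketch does not close as written, since $f$ may a priori vary. It works after reordering. Because $P$ is parallel, hence closed, $d(\sqrt{-1}\,\ddbar F)=0$ already yields $df\wedge\omega_0=0$ before $P$ is killed. So $f$ is constant, and then zero. After that, $P=\ddbar F\le0$ at a maximum of $F$, together with $\tr P=0$, forces $P=0$.
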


\begin{proof}
The identity \eqref{eq:cover-unified} descends to $M$ with $g=g_0$ and $A=\ddbar F$.

Assume first that $\eps_k\neq0$. Differentiating \eqref{eq:cover-unified} in a $(1,0)$-direction and using \eqref{eq:parallel-R} gives
\[
L(\nabla_mA)=-\frac{(\Phi_k)_m}{\eps_k}G_{g_0}.
\]
By Lemma \ref{lem:L-inj},
\begin{equation}\label{eq:nablaA}
\nabla_mA_{i\bar j}
=-\frac{(\Phi_k)_m}{2\eps_k}(g_0)_{i\bar j}.
\end{equation}
Because $A_{i\bar j}=\nabla_i\nabla_{\bar j}F$ and $g_0$ is K\"ahler, holomorphic covariant derivatives commute:
\[
\nabla_mA_{i\bar j}=\nabla_iA_{m\bar j}.
\]
Combining this with \eqref{eq:nablaA},
\[
(\Phi_k)_m(g_0)_{i\bar j}
=(\Phi_k)_i(g_0)_{m\bar j}.
\]
At a fixed point choose a unitary frame. For each $m$, choose $i\neq m$ and put $j=i$. Then $(\Phi_k)_m=0$, so $\partial\Phi_k=0$. Since $\Phi_k$ is real-valued, also $\bar\partial\Phi_k=0$, and hence $d\Phi_k=0$. Equation \eqref{eq:nablaA} now gives $\nabla^{1,0}A=0$. Since $A=\ddbar F$ is Hermitian and the K\"ahler connection is compatible with complex conjugation, its conjugate equation gives $\nabla^{0,1}A=0$. Thus $\nabla A=0$.

The real $(1,1)$-form $\sqrt{-1}\ddbar F$ is therefore parallel and exact. A parallel form is coclosed, and integration by parts gives
\[
\int_M|\ddbar F|^2\,dV_{g_0}=0.
\]
Thus $A=0$. Taking the trace, $\Delta_{g_0}F=0$, so $F$ is constant.

It remains to consider $\eps_k=0$. Since $\eps_2=\beta\neq0$, this can occur only for $k=1,3,4$. In this case \eqref{eq:cover-unified} becomes
\[
\alpha L(\Ric_{g_0})+4\beta R_{g_0}=2ce^{2F}G_{g_0}.
\]
The left-hand side is parallel. If $c\neq0$, then $d(e^{2F})=0$, and hence $F$ is constant.
\end{proof}

\section{The strict LCK branch}

We now assume that the Lee form is not exact. By Corollary \ref{cor:cover-BK}, $h$ is Tricerri--Vanhecke Bochner-flat. We use the following LCK Bochner-flat uniformization result from Huang--Wan, which combines Kamishima's uniformization theorem with Fried's classification of incomplete closed similarity manifolds (see \cite[Proposition 5.1]{HuangWan}; see also \cite{Kam06,Fried}).

\begin{proposition}[Huang--Wan, Kamishima--Fried]\label{prop:HW-uniformization}
Let $(M^n,h)$ be a connected compact strict LCK manifold, $n\geq2$, whose Tricerri--Vanhecke Bochner tensor vanishes. After multiplying the canonical K\"ahler metric on the universal cover by a positive constant, one may identify
\[
(\wt M,g)=(\C^n\setminus\{0\},g_{\mathrm{Euc}}).
\]
Every deck transformation is of the form
\[
\gamma(z)=r_\gamma U_\gamma z,
\qquad r_\gamma>0,
\quad U_\gamma\in U(n),
\]
and $r_\gamma\neq1$ for at least one deck transformation.
\end{proposition}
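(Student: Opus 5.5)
This proposition is quoted from Huang--Wan, so the plan is to reassemble its proof from three ingredients: Kamishima's uniformization of Bochner-flat LCK manifolds, Fried's classification of closed similarity manifolds, and the automorphy relation \eqref{eq:deck}.

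\textbf{Step 1: local flatness or local symmetry.} First I will use that the Tricerri--Vanhecke Bochner tensor of $h$ vanishes. By the conformal invariance recalled in the proof of Corollary \ref{cor:cover-BK}, the canonical K\"ahler metric $g$ on $\wt M$ is then Bochner--K\"ahler. Kamishima's uniformization theorem for Bochner-flat LCK structures, in the strict case, forces $g$ to be locally modelled on one of two things. The first option is flat $\C^n$ with the similarity group $\C^n\rtimes(\R_{>0}\times U(n))$. The second is a Bochner--K\"ahler geometry with a homothety group that is not isometric.

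\textbf{Step 2: exclude non-flat models and pass to a similarity structure.} The key point is that the deck group contains some $\gamma$ with $r_\gamma\neq1$. This holds precisely because the Lee form is not exact, by \eqref{eq:deck} and the criterion of \cite{DO,Vaisman}.

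I plan to exclude the non-flat models by a scaling argument. A homothety with $r\neq1$ rescales the norm of the curvature, $|R_g|_g\circ\gamma=r_\gamma^{-2}|R_g|_g$. In a locally symmetric model, $|R_g|_g$ is constant and nonzero unless $g$ is flat. In general I would instead invoke Kamishima's explicit list, in which only the flat model admits a cocompact action of a group containing strict homotheties.

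Hence $g$ is flat, and the developing map realizes $M$ as a closed manifold carrying a $(\mathrm{Sim}(\C^n)\cap GL\text{-type})$-structure: a similarity structure whose holonomy lies in $\C^n\rtimes(\R_{>0}\times U(n))$ and which is not a Euclidean structure.

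\textbf{Step 3: Fried and the final identification.} Fried's theorem says that a closed similarity manifold which is not Euclidean is incomplete, and that its developing map is a diffeomorphism onto $\C^n\setminus\{p\}$. It also gives that the holonomy fixes $p$. Translating $p$ to $0$, every deck transformation becomes linear of the form $z\mapsto r_\gamma U_\gamma z$, and $g$ pulls back to a constant multiple of $g_{\mathrm{Euc}}$. Since $\C^n\setminus\{0\}$ is simply connected for $n\geq2$, it is the universal cover. Finally, $r_\gamma\neq1$ for some $\gamma$, again because the Lee form is non-exact.

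\textbf{Main obstacle.} The main obstacle is Step 2: ruling out non-flat Bochner--K\"ahler models admitting cocompact actions that contain non-isometric homotheties. I expect to rely on Kamishima's classification for this rather than reprove it.
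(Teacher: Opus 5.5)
Your plan follows the same route the paper indicates: the paper does not prove this proposition but cites Huang--Wan's Proposition 5.1, which obtains flatness of the cover from Kamishima's uniformization of compact strict Bochner-flat LCK manifolds, then applies Fried's theorem and the automorphy \eqref{eq:deck}, and your Step 3 reconstructs that last part correctly. One caution: your curvature-scaling argument needs $g$ to be locally symmetric, which is not available on the noncompact cover $\wt M$ (the compact Bochner--K\"ahler classification does not apply to $g$, since $g$ does not descend to $M$), so the flatness in Step 2 must come from Kamishima's strict-LCK theorem itself rather than from the compact Bochner--K\"ahler list.
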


Under this identification,
\[
\pi^*h=e^{2F}g_{\mathrm{Euc}}.
\]
For a deck transformation $\gamma(z)=rUz$, one has $\gamma^*g_{\mathrm{Euc}}=r^2g_{\mathrm{Euc}}$. Since $\gamma^*(\pi^*h)=\pi^*h$, formula \eqref{eq:deck} gives
\[
F\circ\gamma=F-\log r,
\qquad\text{that is,}\qquad
F(rUz)=F(z)-\log r.
\]
In the strict LCK case one can choose such a transformation with $r\neq1$.

\begin{proposition}\label{prop:strict}
Let $(M^n,h)$ be a connected compact strict LCK manifold with constant $k$th-mixed curvature. Then necessarily
\[
c=0,
\qquad
\eps_k=0.
\]
In particular, no compact strict LCK metric has constant second mixed curvature.
\end{proposition}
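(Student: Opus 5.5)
The plan is to use the rigid model from Proposition \ref{prop:HW-uniformization} and reduce the unified identity \eqref{eq:cover-unified} to a very simple equation for the conformal factor $F$. By Corollary \ref{cor:cover-BK}, $h$ is Tricerri--Vanhecke Bochner-flat. Hence, after a constant rescaling, $(\wt M,g)=(\C^n\setminus\{0\},g_{\mathrm{Euc}})$ and $F(rUz)=F(z)-\log r$ for some deck transformation with $r\neq1$. On this cover we have $R_g=0$ and $\Ric_g=0$, so \eqref{eq:cover-unified} reduces to
\[
-2\eps_k L(A)=2\Phi_k G_{\mathrm{Euc}},\qquad A=\ddbar F.
\]

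First consider the case $\eps_k=0$. The left-hand side then vanishes, so $\Phi_k G=0$. This case only occurs for $k=1,3,4$, where $\Phi_k=ce^{2F}$, so $c=0$, which is the asserted conclusion. It therefore suffices to show that $\eps_k\neq0$ is impossible.

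Now assume $\eps_k\neq0$. Lemma \ref{lem:L-inj} gives $A_{i\bar j}=f\,\delta_{ij}$ with $f=-\Phi_k/(2\eps_k)$. Exactly as in the proof of Proposition \ref{prop:GCK}, the symmetry $\partial_mA_{i\bar j}=\partial_iA_{m\bar j}$ together with $n\geq2$ yields $\partial f=0$. Since $f$ is real, $f$ is a constant $\lambda$. Thus $F-\lambda|z|^2$ is pluriharmonic on $\C^n\setminus\{0\}$. Taking $\ddbar$ of the automorphy relation $F(rUz)=F(z)-\log r$ gives $\lambda r^2\delta_{ij}=\lambda\delta_{ij}$ (using $U\in U(n)$). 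Since $r\neq1$, we get $\lambda=0$, so $F$ itself is pluriharmonic.

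The remaining step, which I expect to be the main point, is to rule out a pluriharmonic $F$ with this automorphy. For $n\geq2$, the domain $\C^n\setminus\{0\}$ is simply connected, so $F=\operatorname{Re}\Psi$ for a holomorphic function $\Psi$. By Hartogs' extension theorem, $\Psi$ extends holomorphically across the origin, so $F$ extends continuously to $\C^n$. Replace $\gamma$ by $\gamma^{-1}$ if necessary so that $r<1$. Iterating the automorphy gives $F(r^mU^mz)=F(z)-m\log r\to+\infty$ as $m\to\infty$, while $r^mU^mz\to0$. This contradicts the continuity of $F$ at $0$. Hence $\eps_k=0$, and therefore $c=0$ by the first case.

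Finally, since $\eps_2=\beta\neq0$, the case $k=2$ can never satisfy the necessary condition $\eps_k=0$. So no compact strict LCK metric has constant second mixed curvature. The step I would check most carefully is the Hartogs argument, in particular that the automorphy survives the extension and that simple connectivity really requires $n\geq2$. Both hold in the present setting.
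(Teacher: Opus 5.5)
Your proof is correct and follows the paper's argument. You reduce \eqref{eq:cover-unified} on the flat cover $\C^n\setminus\{0\}$, use Lemma~\ref{lem:L-inj} and commuting holomorphic derivatives to get $\ddbar F=\lambda g_{\mathrm{Euc}}$ with $\lambda$ constant, kill $\lambda$ with a deck transformation $z\mapsto rUz$ ($r\neq1$), and then contradict the automorphy of the pluriharmonic $F$ via Hartogs. The only difference is cosmetic: you iterate $\gamma^{\pm1}$ toward the origin and use continuity of the extended $F$ at $0$, whereas the paper notes that $G(rUz)-G(z)$ has constant real part, hence is constant, and evaluates it at $z=0$.
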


\begin{proof}
On the flat universal K\"ahler cover, \eqref{eq:cover-unified} reduces to
\begin{equation}\label{eq:flat-identity}
-2\eps_kL(A)=2\Phi_kG_{\mathrm{Euc}}.
\end{equation}
Suppose first that $\eps_k\neq0$. Lemma \ref{lem:L-inj} gives
\begin{equation}\label{eq:A-lambda}
A=\lambda g_{\mathrm{Euc}}
\end{equation}
for a real-valued function $\lambda$. In Euclidean coordinates,
\[
F_{i\bar j}=\lambda\delta_{ij}.
\]
Differentiating with respect to $z^m$ gives
\[
F_{mi\bar j}=\lambda_m\delta_{ij},
\]
while differentiating $F_{m\bar j}=\lambda\delta_{mj}$ with respect to $z^i$ gives
\[
F_{im\bar j}=\lambda_i\delta_{mj}.
\]
Since ordinary holomorphic derivatives commute,
\[
\lambda_m\delta_{ij}=\lambda_i\delta_{mj}.
\]
For fixed $m$, choose $i\neq m$ and set $j=i$. Then $\lambda_m=0$. Hence $\partial\lambda=0$, and since $\lambda$ is real-valued, $d\lambda=0$.

Choose a deck transformation $\gamma(z)=rUz$ with $r\neq1$. Since $F\circ\gamma=F-\log r$ and $r$ is constant,
\[
\gamma^*A
=\ddbar(F\circ\gamma)
=\ddbar F=A.
\]
On the other hand, pullback commutes with multiplication by a function, so \eqref{eq:A-lambda} and the constancy of $\lambda$ give
\[
\gamma^*A
=\gamma^*(\lambda g_{\mathrm{Euc}})
=(\lambda\circ\gamma)\gamma^*g_{\mathrm{Euc}}
=\lambda r^2g_{\mathrm{Euc}}.
\]
Thus $\lambda(r^2-1)=0$, and hence $\lambda=0$. Therefore
\[
A=0.
\]
In particular, $\Delta F=0$. Equation \eqref{eq:flat-identity} now gives $ce^{2F}=0$, so $c=0$.

Since $A=\ddbar F=0$, the function $F$ is real pluriharmonic on $\C^n\setminus\{0\}$. For $n\geq2$ the punctured space is simply connected, so $F=\operatorname{Re}G$ for a holomorphic function $G$ on $\C^n\setminus\{0\}$, after adding a constant to $G$ if necessary. Hartogs' extension theorem extends $G$ holomorphically across the origin. The automorphy relation implies
\[
\operatorname{Re}\bigl(G(rUz)-G(z)\bigr)=-\log r.
\]
A holomorphic function with constant real part is constant, so $G(rUz)-G(z)$ is constant. Since the identity extends to $z=0$ and $rU0=0$, this constant is zero. Hence $\log r=0$, contradicting $r\neq1$.

Therefore $\eps_k\neq0$ is impossible in the strict LCK case. Hence $\eps_k=0$. This can occur only for $k=1,3,4$. Then \eqref{eq:flat-identity} becomes
\[
0=2ce^{2F}G_{\mathrm{Euc}},
\]
so $c=0$.
\end{proof}

\section{Proof of the main LCK rigidity theorem}

We now combine the two cases above.

\begin{proof}[Proof of Theorem \ref{thm:main-intro}]
If the Lee form is exact, the result follows from Proposition \ref{prop:GCK}. If the Lee form is not exact, Proposition \ref{prop:strict} shows that necessarily $c=0$ and $\eps_k=0$. This proves Theorem \ref{thm:main-intro}.
\end{proof}

Conjecture \ref{conj:kmixed} for compact LCK manifolds follows immediately from Theorem \ref{thm:main-intro}.

\begin{proof}[Proof of Corollary \ref{cor:k2-intro}]
For $k=2$ one has $\eps_2=\beta\neq0$. Hence Theorem \ref{thm:main-intro} applies for every value of the curvature constant $c$.
\end{proof}

The exceptional zero-curvature cases can be described on the canonical K\"ahler cover.

\begin{theorem}\label{thm:exceptional}
Let $(M^n,h)$ be a connected compact LCK manifold with
\[
\cC^{(k)}_{\alpha,\beta}\equiv0,
\qquad
\eps_k=0.
\]
Then:
\begin{enumerate}[label=\textnormal{(\arabic*)}]
\item If $k=1$, the canonical K\"ahler metric on the universal cover is flat.
\item If $k=3$ or $4$ and $n\geq3$, the canonical K\"ahler metric on the universal cover is flat.
\item If $k=3$ or $4$ and $n=2$, the canonical K\"ahler metric is scalar-flat Bochner--K\"ahler. If the LCK metric is globally conformal K\"ahler, this K\"ahler metric is locally symmetric; if the LCK metric is strict, the canonical K\"ahler cover is flat.
\end{enumerate}
\end{theorem}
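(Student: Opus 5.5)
On the canonical K\"ahler cover, with $c=0$ and $\eps_k=0$, Proposition \ref{prop:cover-identity} reduces to an algebraic identity for the K\"ahler curvature of $g$ alone. For $k=1$ it reads
\[
\alpha L(\Ric_g)+4\beta R_g=0,\qquad \beta=-n\alpha .
\]
For $k=3,4$ it reads
\[
\alpha L(\Ric_g)+4\beta R_g=0,\qquad \beta=-\alpha .
\]
Since $\beta\neq0$, also $\alpha\neq0$. In both cases $R_g=\mu L(\Ric_g)$ with $\mu=\alpha/(-4\beta)$; explicitly $\mu=1/(4n)$ for $k=1$ and $\mu=1/4$ for $k=3,4$. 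I will then take the Ricci contraction of this identity using \eqref{eq:L-contraction}:
\[
\Ric_g=\mu\bigl((n+2)\Ric_g+s\,g\bigr).
\]
Tracing once more gives
\[
s=\mu(2n+2)s .
\]

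For $k=1$ we have $\mu(2n+2)=(n+1)/(2n)\neq1$ for $n\geq2$, so $s=0$. The Ricci equation then becomes $\bigl(1-(n+2)/(4n)\bigr)\Ric_g=0$, and since $4n\neq n+2$ this forces $\Ric_g=0$. Hence $R_g=\mu L(0)=0$, so $g$ is flat.

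For $k=3,4$ we have $\mu(2n+2)=(n+1)/2$. If $n\geq3$ this is not $1$, so $s=0$. The Ricci equation then becomes $(1-(n+2)/4)\Ric_g=0$, and $n+2\neq4$ gives $\Ric_g=0$, hence $R_g=0$. If $n=2$, the trace equation is vacuous and the Ricci equation is the identity $\Ric=\Ric+\tfrac14 s g$, which gives only $s=0$. So in this case the identity says $R_g=\tfrac14L(\Ric_g)$, which is precisely the decomposition \eqref{eq:BK-decomp} with $B=0$ and $s=0$ (note $1/(n+2)=1/4$). Thus $g$ is scalar-flat Bochner--K\"ahler, consistent with Corollary \ref{cor:cover-BK}.

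It remains to treat the two branches when $n=2$. If $h$ is globally conformal K\"ahler, the K\"ahler metric $g_0$ lives on the compact $M$ and is Bochner--K\"ahler, so the compact classification \cite{Kam94,Kam05,Bryant} (as used in Section 5) makes it locally symmetric. If $h$ is strict, Corollary \ref{cor:cover-BK} together with Proposition \ref{prop:HW-uniformization} identifies the cover, up to scaling, with $(\C^n\setminus\{0\},g_{\mathrm{Euc}})$, which is flat.

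The main thing to check is the $n=2$ bookkeeping, namely that the identity is exactly the Bochner-flat scalar-flat condition and imposes nothing more. Everything else is trace algebra.
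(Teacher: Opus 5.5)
Your proof is correct and follows the paper's argument essentially step for step. You reduce the cover identity to $\alpha L(\Ric_g)+4\beta R_g=0$, take a Ricci contraction and a trace, and for $n=2$ read off $s_g=0$ and $B(R_g)=0$ from \eqref{eq:BK-decomp}. You then conclude with the compact Bochner--K\"ahler classification in the globally conformal K\"ahler case, or with Proposition \ref{prop:HW-uniformization} in the strict case.

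One harmless slip: for $n=2$ the trace equation is not vacuous. It reads $s=\tfrac32 s$, which already forces $s=0$, as the paper's trace argument does for every $n\geq2$. You recover $s=0$ from the Ricci equation anyway, so nothing is lost.
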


\begin{proof}
On the universal K\"ahler cover, \eqref{eq:cover-unified} becomes
\begin{equation}\label{eq:exceptional-cover}
\alpha L(\Ric_g)+4\beta R_g=0.
\end{equation}
For $k=1$, $\eps_1=0$ means $\beta=-n\alpha$. Since $\beta\neq0$, $\alpha\neq0$, and \eqref{eq:exceptional-cover} becomes
\[
L(\Ric_g)-4nR_g=0.
\]
Taking a Ricci contraction gives
\[
(n+2)\Ric_g+s_g g-4n\Ric_g=0,
\]
or
\[
s_g g=(3n-2)\Ric_g.
\]
Tracing once more yields $s_g=0$, hence $\Ric_g=0$ and then $R_g=0$.

For $k=3,4$, $\eps_k=0$ means $\beta=-\alpha$, and \eqref{eq:exceptional-cover} becomes
\[
L(\Ric_g)-4R_g=0.
\]
A Ricci contraction gives
\[
(n-2)\Ric_g+s_g g=0.
\]
Taking the trace yields $s_g=0$. If $n\geq3$, then $\Ric_g=0$ and consequently $R_g=0$.

If $n=2$, the same equation says $s_g=0$ and
\[
R_g=\frac14L(\Ric_g),
\]
which is precisely the scalar-flat Bochner--K\"ahler condition. In the globally conformal K\"ahler case this metric descends to a compact Bochner--K\"ahler metric, hence is locally symmetric by the compact classification \cite{Kam94,Kam05,Bryant}. In the strict LCK case flatness follows from Proposition \ref{prop:HW-uniformization}.
\end{proof}

\begin{remark}\label{rem:sharp}
These exceptional parameters are sharp. The standard isosceles Hopf metric satisfies
\[
\cC^{(1)}_{\alpha,-n\alpha}\equiv0,
\]
and similarly the conformal covariance \eqref{eq:conf-cov34} produces zero-curvature non-K\"ahler examples for $k=3,4$ at $\alpha+\beta=0$. Thus the hypothesis in Theorem \ref{thm:main-intro} cannot in general be weakened when $c=0$.
\end{remark}

\section{Special parameter phenomena}

Finally, we give two calculations for special parameters which are independent of the LCK classification.

\subsection{The Bochner-critical line in the K\"ahler case}

Let $(M^n,g)$ be K\"ahler and suppose
\[
\cC_{\alpha,\beta}\equiv c.
\]
All four Chern Ricci tensors coincide, so the choice of $k$ is irrelevant. Polarization gives
\begin{equation}\label{eq:K-polar}
\alpha L(\Ric)+4\beta R=2cG.
\end{equation}
Taking a Ricci contraction and using
\[
\tr_{3,4}L(\Ric)=(n+2)\Ric+s g,
\qquad
\tr_{3,4}G=(n+1)g,
\]
we obtain
\begin{equation}\label{eq:K-Ric-eq}
\bigl((n+2)\alpha+4\beta\bigr)\Ric+\alpha s g
=2(n+1)c g.
\end{equation}

Set
\[
D=(n+2)\alpha+4\beta.
\]

\begin{proof}[Proof of Theorem \ref{thm:kahler-intro}]
Assume first that $D\neq0$. Taking the trace-free part of \eqref{eq:K-Ric-eq} gives
\[
D\Ric^0=0,
\]
so $\Ric=\lambda g$ for a real-valued function $\lambda$. Let $\rho$ be the K\"ahler Ricci form. Since $\rho=\lambda\omega$ and $d\rho=0=d\omega$,
\[
d\lambda\wedge\omega=0.
\]
Wedging with $\omega^{n-2}$ gives
\[
d\lambda\wedge\omega^{n-1}=0.
\]
At each point the Lefschetz map
\[
L^{n-1}:\Lambda^1T^*M\longrightarrow\Lambda^{2n-1}T^*M,
\qquad
\xi\longmapsto\xi\wedge\omega^{n-1},
\]
is an isomorphism. Indeed, write $\xi=\iota_X\omega$. Then
\[
\iota_X(\omega^n)=n(\iota_X\omega)\wedge\omega^{n-1}
=n\xi\wedge\omega^{n-1}.
\]
If $\xi\wedge\omega^{n-1}=0$, then $\iota_X(\omega^n)=0$. Since $\omega^n$ is a volume form, $X=0$ and hence $\xi=0$; equality of dimensions gives the isomorphism. Thus this is only a pointwise symplectic linear-algebra fact, not the global Hard Lefschetz theorem. Hence $d\lambda=0$, so the Einstein factor is constant. Returning to \eqref{eq:K-polar} and using $L(g)=2G$, we obtain
\[
R=\frac{c-\alpha\lambda}{2\beta}G,
\]
and therefore the holomorphic sectional curvature is constant.

Now suppose $D=0$. Then $4\beta=-(n+2)\alpha$, and $\alpha\neq0$. Equation \eqref{eq:K-polar} becomes
\begin{equation}\label{eq:BKcritical}
L(\Ric)-(n+2)R=\frac{2c}{\alpha}G.
\end{equation}
Writing $\Ric=\Ric^0+\frac{s}{n}g$ and using \eqref{eq:BK-decomp} together with $L(g)=2G$, we obtain
\begin{align*}
L(\Ric)-(n+2)R
&=L(\Ric^0)+\frac{2s}{n}G
 -(n+2)B(R)-L(\Ric^0)
 -\frac{(n+2)s}{n(n+1)}G\\
&=-(n+2)B(R)+\frac{s}{n+1}G.
\end{align*}
Comparing the Bochner and scalar components with \eqref{eq:BKcritical} gives
\[
B(R)=0,
\qquad
s=\frac{2(n+1)c}{\alpha}.
\]
Conversely, if $B(R)=0$ and $s=2(n+1)c/\alpha$, the same computation gives
\[
L(\Ric)-(n+2)R=\frac{2c}{\alpha}G.
\]
Multiplying by $\alpha$ and using $4\beta=-(n+2)\alpha$ recovers \eqref{eq:K-polar}. If $M$ is compact, local symmetry follows from the compact Bochner--K\"ahler classification \cite{Kam94,Kam05,Bryant}.
\end{proof}

\begin{remark}
The proof of Theorem \ref{thm:kahler-intro}(1) requires only
\[
(n+2)\alpha+4\beta\neq0.
\]
Thus the additional condition $(n+1)\alpha+2\beta\neq0$ in the earlier K\"ahler statement is not needed (see \cite[Proposition 3.1]{ChenTang}). The point is that once $\Ric=\lambda g$ is known, the closedness of the K\"ahler Ricci form and the pointwise Lefschetz isomorphism force $\lambda$ to be constant, even on the averaging-critical line. If $(n+1)\alpha+2\beta=0$, a complex space form may have zero mixed curvature although its holomorphic sectional curvature is nonzero.
\end{remark}

\begin{example}\label{ex:product}
The exceptional line in Theorem \ref{thm:kahler-intro} cannot be removed. Let
\[
M=M_+^p\times M_-^q,
\qquad p+q=n,
\]
where the two factors are K\"ahler space forms with holomorphic sectional curvatures $\kappa$ and $-\kappa$, respectively. For a unit vector $X=X_++X_-$, put $|X_+|^2=t$. Then
\[
H(X)=\kappa(2t-1)
\]
and
\[
\Ric(X,\bar X)
=\frac{p+1}{2}\kappa t
-\frac{q+1}{2}\kappa(1-t).
\]
Hence $\cC_{\alpha,\beta}(X)$ is independent of $t$ exactly when
\[
(n+2)\alpha+4\beta=0.
\]
The resulting constant is
\[
c=\frac{\alpha\kappa}{4}(p-q).
\]
For instance, on $\CP^2\times\Sigma_g$, $g(\Sigma_g)>1$, with the factors normalized to have holomorphic sectional curvatures $+1$ and $-1$, respectively,
\[
\cC_{4,-5}\equiv1,
\]
although the holomorphic sectional curvature is not constant.
\end{example}

\subsection{Three distinguished lines for the first mixed curvature}

For $k=1$, the preceding calculations single out three different parameter lines:
\[
n\alpha+\beta=0,
\qquad
(n+1)\alpha+2\beta=0,
\qquad
(n+2)\alpha+4\beta=0.
\]
These three lines have different meanings. The first is the conformal-critical line, as shown by \eqref{eq:conf-cov1}. The second is the averaging-critical line, where Theorem \ref{thm:critical-intro} turns the curvature constant into the torsion energy. The third is the Bochner-critical line in K\"ahler geometry, where the constant mixed curvature equation is exactly the constant-scalar-curvature Bochner--K\"ahler equation.

This distinction may also be useful for the general Hermitian conjecture. In particular, at
\[
(\alpha,\beta)=(2,-(n+1)),
\]
Proposition \ref{prop:general-polarization} and Corollary \ref{cor:normalized-critical} give simultaneously
\[
B(K)=0,
\qquad
\int_M|\eta|^2\,dV_g=nc\Vol(M,g).
\]
Thus a possible counterexample at this parameter must have $c>0$ and a prescribed positive torsion energy. It is natural to ask whether the remaining nonsymmetric Chern curvature identities can rule out this case.

\section*{Acknowledgements}
The authors would like to thank Professor Fangyang Zheng for his long-term support and help. The main ideas of this work were developed by the authors. During the preparation of this manuscript, ChatGPT (OpenAI) was used for language polishing and structural organization. The authors carefully reviewed and revised the manuscript and take full responsibility for all mathematical statements, arguments, and conclusions.

\end{document}